\documentclass[a4paper,12pt]{amsart}

\usepackage{float}
\usepackage{euscript,eufrak,verbatim}
\usepackage{graphicx}
\usepackage[usenames]{color}
\usepackage[colorlinks,linkcolor=red,anchorcolor=blue,citecolor=blue]{hyperref}
\usepackage{amsmath}
\usepackage{amsthm}
\usepackage[all]{xy}
\usepackage{graphicx} 
\usepackage{amssymb} 
\usepackage{amsfonts}
\usepackage{enumitem}
\usepackage{mathrsfs}
\usepackage{amscd}
\usepackage{pifont}

\usepackage[pagewise]{lineno}

\makeatletter
\newcommand{\svdots}{%
  \vbox{\fontsize{\sf@size}{\sf@size pt}\linespread{0.3}\selectfont
    \kern0.2\baselineskip
    \hbox{.}\hbox{.}\hbox{.}%
    \kern0.1\baselineskip
  }%
}
\makeatother

\newtheorem*{conj}{Conjecture}
\newtheorem{theorem}{Theorem}

\theoremstyle{plain}
\newtheorem{thm}{\protect\theoremname}[section]
\theoremstyle{plain}
\newtheorem*{thm*}{\protect\theoremname}
\theoremstyle{plain}
\newtheorem{lem}[thm]{\protect\lemmaname}
\theoremstyle{definition}

\theoremstyle{definition}

\theoremstyle{plain}
\newtheorem{prop}[thm]{\protect\propositionname}
\theoremstyle{definition}
\newtheorem*{rem*}{\protect\remarkname}
\theoremstyle{plain}

\theoremstyle{plain}
\newtheorem*{fact*}{\protect\factname}
\theoremstyle{definition}

\usepackage{babel}
\providecommand{\corollaryname}{Corollary}
  \providecommand{\definitionname}{Definition}
  \providecommand{\factname}{Fact}
  \providecommand{\lemmaname}{Lemma}
  \providecommand{\propositionname}{Proposition}
  \providecommand{\remarkname}{Remark}
\providecommand{\theoremname}{Theorem}

\makeatletter
\def\blfootnote{\xdef\@thefnmark{}\@footnotetext}
\makeatother

\newcommand{\Z}{\mathbb{Z}}
\newcommand{\N}{\mathbb{N}}
\newcommand{\PP}{\mathbb{P}}

\usepackage{babel}
\providecommand{\corollaryname}{Corollary}
  \providecommand{\definitionname}{Definition}
  \providecommand{\factname}{Fact}
  \providecommand{\lemmaname}{Lemma}
  \providecommand{\propositionname}{Proposition}
  \providecommand{\remarkname}{Remark}
\providecommand{\theoremname}{Theorem}

\usepackage{babel}
\providecommand{\corollaryname}{Corollary}
  \providecommand{\definitionname}{Definition}
  \providecommand{\factname}{Fact}
  \providecommand{\lemmaname}{Lemma}
  \providecommand{\propositionname}{Proposition}
  \providecommand{\remarkname}{Remark}
\providecommand{\theoremname}{Theorem}

\usepackage{babel}
\providecommand{\corollaryname}{Corollary}
  \providecommand{\definitionname}{Definition}
  \providecommand{\factname}{Fact}
  \providecommand{\lemmaname}{Lemma}
  \providecommand{\propositionname}{Proposition}
  \providecommand{\remarkname}{Remark}
\providecommand{\theoremname}{Theorem}

\usepackage{babel}
\providecommand{\corollaryname}{Corollary}
  \providecommand{\definitionname}{Definition}
  \providecommand{\factname}{Fact}
  \providecommand{\lemmaname}{Lemma}
  \providecommand{\propositionname}{Proposition}
  \providecommand{\remarkname}{Remark}
\providecommand{\theoremname}{Theorem}

\usepackage{babel}
\providecommand{\corollaryname}{Corollary}
  \providecommand{\definitionname}{Definition}
  \providecommand{\factname}{Fact}
  \providecommand{\lemmaname}{Lemma}
  \providecommand{\propositionname}{Proposition}
  \providecommand{\remarkname}{Remark}
\providecommand{\theoremname}{Theorem}

\usepackage{babel}
\providecommand{\corollaryname}{Corollary}
  \providecommand{\definitionname}{Definition}
  \providecommand{\factname}{Fact}
  \providecommand{\lemmaname}{Lemma}
  \providecommand{\propositionname}{Proposition}
  \providecommand{\remarkname}{Remark}
\providecommand{\theoremname}{Theorem}

\usepackage{babel}
\providecommand{\corollaryname}{Corollary}
  \providecommand{\definitionname}{Definition}
  \providecommand{\factname}{Fact}
  \providecommand{\lemmaname}{Lemma}
  \providecommand{\propositionname}{Proposition}
  \providecommand{\remarkname}{Remark}
\providecommand{\theoremname}{Theorem}

\usepackage{babel}
\providecommand{\corollaryname}{Corollary}
  \providecommand{\definitionname}{Definition}
  \providecommand{\factname}{Fact}
  \providecommand{\lemmaname}{Lemma}
  \providecommand{\propositionname}{Proposition}
  \providecommand{\remarkname}{Remark}
\providecommand{\theoremname}{Theorem}

\usepackage{babel}
\providecommand{\corollaryname}{Corollary}
  \providecommand{\definitionname}{Definition}
  \providecommand{\factname}{Fact}
  \providecommand{\lemmaname}{Lemma}
  \providecommand{\propositionname}{Proposition}
  \providecommand{\remarkname}{Remark}
\providecommand{\theoremname}{Theorem}

\usepackage{babel}
\providecommand{\corollaryname}{Corollary}
  \providecommand{\definitionname}{Definition}
  \providecommand{\factname}{Fact}
  \providecommand{\lemmaname}{Lemma}
  \providecommand{\propositionname}{Proposition}
  \providecommand{\remarkname}{Remark}
\providecommand{\theoremname}{Theorem}

\usepackage{babel}
\providecommand{\corollaryname}{Corollary}
  \providecommand{\definitionname}{Definition}
  \providecommand{\factname}{Fact}
  \providecommand{\lemmaname}{Lemma}
  \providecommand{\propositionname}{Proposition}
  \providecommand{\remarkname}{Remark}
\providecommand{\theoremname}{Theorem}

\usepackage{babel}
\providecommand{\corollaryname}{Corollary}
  \providecommand{\definitionname}{Definition}
  \providecommand{\factname}{Fact}
  \providecommand{\lemmaname}{Lemma}
  \providecommand{\propositionname}{Proposition}
  \providecommand{\remarkname}{Remark}
\providecommand{\theoremname}{Theorem}

\makeatother

\providecommand{\corollaryname}{Corollary}
\providecommand{\definitionname}{Definition}
\providecommand{\factname}{Fact}
\providecommand{\lemmaname}{Lemma}
\providecommand{\propositionname}{Proposition}
\providecommand{\remarkname}{Remark}
\providecommand{\theoremname}{Theorem}

\author{
Qiang Huo, Zhengxing Lian and Kangbo Ouyang
}

\DeclareMathOperator{\Prob}{Prob}

\def\R{\mathbb{R}}
\def\E{\mathbb{E}}

\def\N{\mathbb{N}}
\def\Z{\mathbb{Z}}

\def\cP{\mathcal{P}}
\def\D{\mathbf{D}}
\def\H{\mathbf{H}}
\def\I{\mathbf{I}}

\newcommand{\cG}{\mathcal{G}}

\def\00{\mathbf 0}
\title{ Odd order cases of Chowla's averages over arbitrary rectangles}

 \address[Qiang Huo]
 {School of Mathematical Sciences, Beijing University of Posts and Tele-communications, Beijing 100876, China
 \newline \indent Key Laboratory of Mathematics and Information Networks (Beijing University of Posts and Telecommunications), Ministry of Education, Beijing 100876, China}

 \email{qianghuo@ustc.edu.cn}

  \address[Zhengxing Lian]
 {School of Mathematical Sciences, Xiamen University, Xiamen 361005, China}

\email{lianzx@xmu.edu.cn}

 \address[Kangbo Ouyang]
 {School of Mathematical Sciences, University of Science and Technology of China, Hefei 230026, China}

 \email{oy19981231@mail.ustc.edu.cn}
\begin{document}

\maketitle

\begin{abstract}
Let $\lambda$ be the Liouville function. 
In this paper we prove an average form of Chowla Conjecture for even  $k $. that is, for every fixed even integer \(k\ge 2\), we prove that 
\[
\lim_{\min\{M,N\}\to \infty}\frac{1}{MN}\sum_{m=1}^M\sum_{n=1}^N\lambda(m)\lambda(m+a_1n)\cdots\lambda(m+a_kn)=0.
\]
 We construct a stationary model for both averaging variables, derive the finite conditional-entropy estimates. 
 A comparison of overlapping intervals on the logarithmic axis then forces this function to vanish.
\end{abstract}

\section{Introduction}

Let \(\lambda:\mathbb{N}\to\{-1,1\}\) denote the Liouville function, defined by \(\lambda(n)=(-1)^{\Omega(n)}\), where \(\Omega(n)\) is the number of prime factors of \(n\) counted with multiplicity. Equivalently, \(\lambda\) is the completely multiplicative function with \(\lambda(p)=-1\) for every prime \(p\).

The Chowla conjecture asserts that the Liouville function has no nontrivial correlations with its shifts.

\begin{conj}[Chowla]\label{conj:chowla}
Let \(k\ge 1\), and let \(a_1,\dots,a_k\) be distinct nonnegative integers. Then
\[
\sum_{n\le x}\lambda(n+a_1)\cdots\lambda(n+a_k)=o(x).
\]
\end{conj}

One may try to view the Chowla Conjecture through the lens of averaging. In \cite{MRT15}, the authors prove an average form of Chowla's Conjecture:
$$\sum_{1\leq a_1,a_2,\ldots,a_k\leq H}|\sum_{m=1}^M\prod_{j=1}^{k}\lambda(m+a_j)|=o(H^kM).$$
In \cite{F18}, Frantzikinakis asked whether one  can find some increasing sequence \(M_\ell\) such that
\[
\lim_{N\to\infty}\lim_{\ell\to\infty}\frac{1}{M_\ell N}
\sum_{m=1}^{M_\ell}\sum_{n=1}^{N}
\lambda(m)\lambda(m+n)\lambda(m+2n)=0?
\]
By \cite{TT19}, one can give a positive answer to this question. Moreover, in \cite{MRTTZ23}, for any \(\epsilon>0\) and every fixed choice of integers \(0=a_0<a_1<\dots<a_k\),
\[
\lim_{M\to\infty}\frac{1}{[M^\epsilon]}
\sum_{n\le M^\epsilon}\frac{1}{M}\sum_{m=1}^{M}
\prod_{j=0}^{k}\lambda(m+a_j n)=0.
\]
Encouraged by these works, we consider more ``free'' rectangular sums \([1,M]\times[1,N]\). In this paper we prove an average form of the Chowla conjecture for even integer \(k\). More precisely, we establish the following result.

\begin{theorem}\label{main theorem}
    For every fixed even integer \(k\ge 2\) and every fixed choice of integers $0=a_0<a_1<\ldots <a_k$,
\begin{equation}\label{main}
 \lim_{\min\{M,N\}\to \infty}\frac{1}{MN}\sum_{m=1}^M\sum_{n=1}^N\prod_{j=0}^{k}\lambda(m+a_jn)=0.   
\end{equation}
\end{theorem}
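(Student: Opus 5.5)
The plan is to use the sign flip from complete multiplicativity. The product in \eqref{main} has $k+1$ factors, an odd number because $k$ is even. Hence for every prime $p$,
\[
\prod_{j=0}^{k}\lambda(pm+a_jpn)=\lambda(p)^{k+1}\prod_{j=0}^{k}\lambda(m+a_jn)=-\prod_{j=0}^{k}\lambda(m+a_jn).
\]
Set $F(M,N)=\frac{1}{MN}\sum_{m\le M}\sum_{n\le N}\prod_j\lambda(m+a_jn)$. The identity gives $F(M/p,N/p)=-F_p(M,N)$, where $F_p$ is the same average restricted to pairs $(m,n)\in p\mathbb{Z}^2$. The aim is to show that for most primes $p$ in suitable ranges, $F_p(M,N)\approx F(M,N)$. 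This would give the approximate functional equation $F(M,N)\approx -F(M/p,N/p)$ for many $p$, and the equation should then force $F\to0$.

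\textbf{Step 1: stationary model.} Argue by contradiction: suppose $|F(M_i,N_i)|\ge\delta$ along a sequence with $\min\{M_i,N_i\}\to\infty$. Take a weak limit of the empirical distributions of the two-parameter field $(m,n)\mapsto(\lambda(m+a_jn+h))_{j,h}$ over the rectangles $[1,M_i]\times[1,N_i]$, with a multiscale averaging in $(M,N)$ as in the Tao--Teräväinen treatment of odd-order Chowla. This produces a process that is stationary under both translations $m\mapsto m+1$ and $n\mapsto n+1$. Since both side lengths tend to infinity, edge effects vanish.

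\textbf{Step 2: entropy decrement.} Run Tao's entropy decrement argument in this model. The key input is that the residues $(m \bmod p, n \bmod p)$ are an additional finite-valued random variable. By subadditivity, the conditional entropy of blocks of length $H$ given these residues can drop by a significant amount only for a sparse set of prime scales. Outside those scales, the block is nearly independent of $(m,n)\bmod p$. The Matomäki--Radziwiłł-type input, or simply the Turán--Kubilius inequality $\lambda(m)\approx -\frac{1}{\#\mathcal P}\sum_{p\in\mathcal P}\lambda(m/p)1_{p\mid m}$, converts this near-independence into $F_p\approx F$ on average over $p$ in a dyadic range $[P,2P]$. This yields, for all but a sparse set of scales $P$,
\[
F(M,N)= -\frac{1}{\#\mathcal P_P}\sum_{p\in\mathcal P_P}F(M/p,N/p)+o(1).
\]

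\textbf{Step 3: comparison on the logarithmic axis.} Write $G(u,v)=F(e^u,e^v)$; this is slowly varying, since a rescaling by $1+\epsilon$ changes $F$ by $O(\epsilon)$. The relation above says $G(u,v)\approx -\,$(average of $G$ over the diagonal segment $(u-t,v-t)$, $t\in[\log P,\log 2P]$). Use two admissible scales $P$ and $P'$ whose shifts overlap, or iterate the relation twice with $P$ and then with $P^2$. One obtains both $G\approx +\text{avg}$ and $G\approx -\text{avg}$ over essentially the same interval. The two signs force the average, and hence $G$, to be $o(1)$, which contradicts $|F|\ge\delta$.

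\textbf{Main obstacle.} I expect the hardest step to be the passage from Step 2 to Step 3 for Cesàro rather than logarithmic averages. The entropy argument naturally controls only logarithmically averaged statistics, and it allows a sparse exceptional set of scales. I would handle this by treating $(\log M,\log N)$ as a two-dimensional parameter and using only the good scales. I would also exploit the fact that dilation by $p$ moves along the diagonal, so the exceptional scales can be avoided by a pigeonhole choice of two good scales whose shifted intervals overlap.
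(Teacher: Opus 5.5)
Your architecture is the right one: the sign flip $\lambda(p)^{k+1}=-1$ on the sublattice $p\mathbb{Z}\times p\mathbb{Z}$, a stationary two-parameter model carrying the residues $(m \bmod p, n \bmod p)$, entropy decrement, and a comparison along the diagonal of the logarithmic axis. The gap is in the step that is supposed to force vanishing.

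In Step 2 you pass to the signed relation $F(M,N)\approx-\frac{1}{\#\mathcal{P}_P}\sum_{p\in\mathcal{P}_P}F(M/p,N/p)$, and Step 3 uses nothing else. A signed average over a window of length $\log 2$ says nothing about the values of $G$ on that window. Two good scales $P,P'$ at the same base point give two relations of the same sign, so no conflict arises. Comparing two base points $s$ and $s+h$ along the diagonal yields only $|g(s)-g(s+h)|=O(h)+o(1)$, which the Lipschitz bound already gives.

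The iteration ``$P$, then $P^2$'' breaks down on two counts. First, it needs the relation at the intermediate base points shifted by $\log p$, and their exceptional scales are not controlled by those at the original point. Second, $\log(pp')$ for $p,p'\sim P$ is spread over an interval of length $2\log 2$ with a convolved density, while $\log q$ for $q\sim P^2$ fills an interval of length $\log 2$. The $+$ and $-$ averages are therefore over different measures. You cannot equate them unless you already know $G$ is nearly constant on these windows, and that near-constancy is exactly what is missing.

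What the paper proves, and what you need, is the unsigned bound. The paper gets it in four steps.

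\textbf{Profile.} Extract, by a diagonal argument, a subsequence on which $f(t)=\lim_\ell D_k(\lfloor e^{-t}M_\ell\rfloor,\lfloor e^{-t}N_\ell\rfloor)$ exists for every $t$. This $f$ is $4$-Lipschitz. The identity $\lfloor\lfloor x\rfloor/p\rfloor=\lfloor x/p\rfloor$ makes the dilation exact, giving $e_p=-f(t+\log p)-f(t)$ in the limit.

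\textbf{Limit measures.} For each $t$ separately, take a plain weak-$*$ limit $\mu_t$ of the rectangle measures. It is already $S,T$-invariant with uniform residues. No multiscale averaging is needed; smearing over scales would in fact lose the link to $F(M_i,N_i)$. The Ces\`aro issue you flag as the main obstacle is absorbed by $f$.

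\textbf{Per-prime Pinsker.} Apply Pinsker to each prime individually, $|e_p|^2\le 50\,\I(\hat X_r;Y_p\mid V_r)$. Then sum over $p\in\mathcal{P}_r$ using conditional subadditivity and the budget $\sum_r I_r/H_r<\infty$. This yields $\sum_r\frac{R_r}{H_r}\bigl(\frac{1}{R_r}\sum_{p\in\mathcal{P}_r}|f(t+\log p)+f(t)|\bigr)^2<\infty$. No Tur\'an--Kubilius input is involved. A two-dimensional version would fail anyway, since $\sum_{p}\mathbf{1}_{p\mid m,\,p\mid n}$ has mean $\sum_p p^{-2}$ and does not concentrate.

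\textbf{Overlap.} The unsigned bound says $f\approx-f(t)$ on most of $[t+\log H_r,\,t+\log H_r+\log 2]$ for good $r$. Since $\sum_r R_r/H_r=\infty$, infinitely many $r$ are good simultaneously for $t$ and for $t+h$. So the common good scale must serve two base points, not two scales at one base point. On the overlap of the two windows, of length $\log 2-h>0$, this gives $f(t)\approx f(t+h)$. Hence $f$ is constant, and a constant $C\neq 0$ contradicts $\sum_r\frac{R_r}{H_r}(2|C|)^2<\infty$.
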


Thus, in contrast to the original Chowla conjecture, we allow the shifts to grow linearly with the second variable \(n\), and we average over an arbitrary rectangle \([1,M]\times[1,N]\) without imposing any relation between \(M\) and \(N\). { The result covers all even orders \(k\), including the three-point case \(k=2\), which is the first nontrivial even case.}

Similar to \cite{TT19}, the proof of Theorem \ref{main theorem} relies on Furstenberg-type correspondences and conditional entropy estimates. However, we apply these tools in different settings and with different technical aims. We  build a Furstenberg system with two commuting transformations. For any invariant measure of the system, we prove an information budget and, using Pinsker’s inequality per prime, a difference bound. We then use a continuous scale function $f(t)$ to characterize the limit of the averaged sums, and use overlapping prime intervals to force $f\equiv 0$.


 In this article, we shall use the following notation: For $L\in\N$, denote \([L]=\{1,\dots,L\}\). If \(A\) is a nonempty finite set, write
\[
\E_{a\in A}b(a)=|A|^{-1}\sum_{a\in A}b(a).
\]
If $b$ is a random variable on a finite set $A$, write $\E b =\E_{a\in A}b(a)$. If $P$ is a probability measure on $A$, write 
$$\E_P b=\sum_{a\in A}P(a)b(a).$$
The quantity
\[
D_k(M,N)=\mathbb{E}_{(m,n)\in[M]\times[N]}
\prod_{j=0}^{k}\lambda(m+a_j n)
\]
is the normalized average of the main theorem. The main theorem asserts that this average tends to zero as \(\min(M,N)\to\infty\). For a prime \(p\), we also define
\begin{equation}\label{def of Ekp}
    E_{k,p}(M,N)
=\E_{(m,n)\in [M]\times [N]}
(p^2\mathbf{1}_{p\mid m,p\mid n}-1)
\prod_{j=0}^{k}\lambda(m+a_jn).
\end{equation}
Here the factor \(p^2\mathbf{1}_{p\mid m,\,p\mid n}-1\) compares the original average with the average restricted to the sublattice $(p\Z)\times (p\Z)$. The factor \(p^2\) compensates for the density \(p^{-2}\) of this sublattice, so that \(E_{k,p}(M,N)\) measures the difference between the normalized restricted average and the original average \(D_k(M,N)\).

We now outline the main ideas of the proof. The argument combines the Furstenberg correspondence principle, finite conditional entropy estimates, and a continuous-scale limiting argument. The proof proceeds in several steps.

\medskip
\noindent\textbf{Step 1: Arithmetic reduction.}

Restricting to the sublattice $(p\Z)\times (p\Z)$, the even integer $k$ and using the complete multiplicativity of \(\lambda\), we obtain the exact identity (see Section \ref{Sec: scale function} for details)
\[
E_{k,p}(M,N)=-c_p(M,N)D_k(M_p,N_p)-D_k(M,N),
\]
where \(M_p=\lfloor M/p\rfloor\), \(N_p=\lfloor N/p\rfloor\), and
\[
c_p(M,N)=\frac{p^2M_pN_p}{MN}.
\]
Moreover,
\[
0\le 1-c_p(M,N)\le \frac{p-1}{M}+\frac{p-1}{N}.
\]
This identity shows that \(E_{k,p}\) is controlled by the values of \(D_k\) at the dilated rectangle \((M_p,N_p)\). It will be used to relate the limiting behaviour of \(D_k\) along different scales.

\medskip
\noindent\textbf{Step 2: A dynamical system for the arithmetic variables.}
We encode the Liouville function as a point in the symbol space \(\Sigma=\{-1,1\}^{\mathbb{Z}}\) (We set $\lambda(n)=1$ if $n\leq 0$). Consider the compact metrizable space
\[
X=\Sigma^{k+1}\times K,\qquad
K=\prod_{p\text{ prime}}(\mathbb{Z}/p\mathbb{Z})^2.
\]
On \(X\) we define two commuting transformations \(S\) and \(T\) by
\[
S\bigl((x_j)_{j=0}^{k},(\rho_p,\sigma_p)_p\bigr)
=\bigl((\tau x_j)_{j=0}^{k},(\rho_p+1,\sigma_p)_p\bigr),
\]
\[
T\bigl((x_j)_{j=0}^{k},(\rho_p,\sigma_p)_p\bigr)
=\bigl((\tau^{a_j}x_j)_{j=0}^{k},(\rho_p,\sigma_p+1)_p\bigr),
\]
where \(\tau\) is the shift on \(\Sigma\).  We will construct the empirial measures $\mu_{M,N}^{\lambda}$ such that 
\[
\int F\,d\mu_{M,N}^{\lambda}=D_k(M,N),\qquad
\int G_p\,d\mu_{M,N}^{\lambda}=E_{k,p}(M,N),
\]
where \(F(z)=\prod_{j=0}^{k}x_j(0)\) and \(G_p(z)=(p^2\mathbf{1}_{\rho_p=\sigma_p=0}-1)F(z)\). We will show that any weak-* limit \(\mu\) of a subsequence $\{\mu_{M,N}^{\lambda}\}$ with \(M_\ell,N_\ell\to\infty\) is \(S\)- and \(T\)-invariant, and its projection to \(K\) is the product uniform measure. 

\medskip
\noindent\textbf{Step 3: Finite conditional entropy estimates.}
For a fixed \(S,T\)-invariant measure \(\mu\), we prove a summable bound for the discrepancies \(e_p=\int G_p\,d\mu\) averaged over dyadic intervals of primes. Let
\[
H_r=2^r,\qquad
\mathcal{P}_r=\{p\in\PP:H_r<p\le 2H_r\},\qquad
R_r=|\mathcal{P}_r|.
\]
For 
$
q_r=\frac{1}{R_r}\sum_{p\in\mathcal{P}_r}|e_p|,$
Using information budget and Pinsker's inequality, we prove a bound
\[
\sum_{r=0}^{\infty}\frac{R_r}{H_r}q_r^2
\le 50(k+1)(a_k+1)\log 2.
\]
\medskip
\noindent\textbf{Step 4: A continuous scale function.}
Given any sequence \(M_\ell,N_\ell\to\infty\), we extract a subsequence along which the limit
\[
f(t)=\lim_{\ell\to\infty}
D_k(\lfloor e^{-t}M_\ell\rfloor,\lfloor e^{-t}N_\ell\rfloor)
\]
exists for every \(t\in\mathbb{R}\). The convergence is uniform on compact intervals, and \(f\) is \(4\)-Lipschitz with \(|f(t)|\le 1\). For each fixed \(t\) and each fixed prime \(p\), the arithmetic identity from Step 1 gives
\[
\lim_{\ell\to\infty}E_{k,p}(M_\ell(t),N_\ell(t))
=-f(t+\log p)-f(t).
\]
Applying the entropy estimate from Step 3 to a limiting measure \(\mu_t\) constructed at scale \(t\), we obtain
\[
\sum_{r=0}^{\infty}\frac{R_r}{H_r}
\left(\frac{1}{R_r}\sum_{p\in\mathcal{P}_r}|f(t+\log p)+f(t)|\right)^2
\le 50(k+1)(a_k+1)\log 2.
\]

\medskip
\noindent\textbf{Step 5: Overlapping prime intervals force \(f\equiv 0\).}
At this stage the problem is purely analytic. We have a \(4\)-Lipschitz function \(f:\mathbb{R}\to[-1,1]\) satisfying the above finite-energy condition. We compare prime averages with integrals uniformly for a class of Lipschitz test functions, and then use the prime number theorem to select a common sequence of good dyadic prime intervals for two scales \(t\) and \(t+h\). The key overlap inequality is
\[
(\log 2-h)|f(t+h)-f(t)|\le J_r(t)+J_r(t+h),
\]
where \(J_r(t)\) is an integral version of the prime average. Letting \(r\to\infty\) along the good sequence shows that \(f(t+h)=f(t)\) for every \(t\) and every \(0<h<\log 2\). Hence \(f\) is constant. If \(f\equiv C\), the finite-energy condition becomes
\[
4|C|^2\sum_{r=0}^{\infty}\frac{R_r}{H_r}<\infty.
\]
But by the prime number theorem,
\[
\frac{R_r}{H_r}\sim\frac{1}{r\log 2},
\]
so the sum diverges. Therefore \(C=0\), i.e. \(f\equiv 0\).

\medskip
\noindent\textbf{Step 6: Completion.}
Suppose Theorem~\ref{main theorem} failed. Then there exist \(\delta>0\) and sequences \(M_\ell,N_\ell\ge \ell\) such that
\[
|D_k(M_\ell,N_\ell)|\ge\delta.
\]
Extracting a subsequence as in Step 4, we obtain a continuous scale function \(f\) with \(|f(0)|\ge\delta\). But Step 5 shows that \(f\equiv 0\), a contradiction. This completes the proof of Theorem~\ref{main theorem}.

\medskip

\section{Preliminaries on Dynamical Systems and Shannon Entropy} 

\subsection{A Furstenberg system for arithmetic variables}
In this section, we will construct a topological dynamical system for estimating $D_k(M,N)$ and $E_{k,p}(M,N)$. Indeed, we will give an invariant measure on this topological dynamical system  give an integral expression relating for $D_k(M,N)$ and $E_{k,p}(M,N)$.

Let $\Sigma =\{-1,1\}^{\Z}$, with the shift $(\tau x)(r)=x(r+1)$. Let 
$$K=\prod_{p\in\PP}(\Z/p\Z)^2 \text{ and } X=\Sigma^{k+1}\times K.$$
For any point  $z\in X$, we write  $z $ as $((x_j)_{j=0}^{ k}, (\rho_p,\sigma_p)_{p\in\PP})$. Then $X$ is a compact metrizable space w.r.t. the product topology. 

Now for $z=((x_j)_{j=0}^{ k}, (\rho_p,\sigma_p)_{p\in\PP}) $ we define the actions by
\begin{equation}\label{def of S T}\begin{array}{ll}
    S:X\to X, Sz=((\tau x_j)_{j=0}^{ k}, (\rho_p+1,\sigma_p)_{p\in\PP}), \text{ and }\\
    T:X\to X, Tz=((\tau^{a_j} x_j)_{j=0}^{ k}, (\rho_p,\sigma_p+1)_{p\in\PP}).
    \end{array}
\end{equation}

The inverse of $S,T$ use the inverse symbolic shifts. Note that $S$ and $T$ commute since for each coordinate the actions commute. Their combined action is
\begin{equation}\label{combined action}S^uT^vz=((\tau^{u+a_jv}x_j)_{j=0}^k,(\rho_p+u,\sigma_p+v)_{p\in\PP})\ \ u,v\in \Z.\end{equation}
Now we define continuous functions on $X$ by
\begin{equation}\label{continuous functions}F(z)=\prod_{j=0}^kx_j(0) \text{  and  }G_p(z)=(p^2\mathbf{1}_{\rho_p=\sigma_p=0}-1)F(z).
\end{equation}
For any $w\in \{-1,1\}^{\Z}$, define $\Psi_w(m,n)=((x_j)_{j=0}^{ k}, (\rho_p,\sigma_p)_{p })\in X$ by
\begin{equation}\label{def of Psi}
   x_j(r)=w(m+a_jn+r),\ \rho_p\equiv m(\text{mod}\  p),\ \sigma_p\equiv n(\text{mod}\  p). 
\end{equation}
For all integers $m,n,u,v $, we have that
\begin{equation}\label{Psi equation}
\Psi_w(m+u,n+v)=S^uT^v\Psi_w(m,n). 
\end{equation}
Indeed, the $j $th symbolic coordinate at index \(r\) on the left hand side is
$$
w(m+u+a_j(n+v)+r)=w(m+a_jn+r+u+a_jv),
$$
which equals the coordinate in \eqref{combined action}. The residues on both sides are $m+u$ and $n+v$ modulo $p$.

Now we define probability measure 
\begin{equation}\label{mu a M N}
    \mu_{M,N}^w=\frac{1}{MN}\sum_{m=1}^{M}\sum_{n=1}^{N}\delta_{\Psi_w(m,n)}.
\end{equation}
As $X $ is compact, for any $M_l,N_l\to \infty $, the measures \(\mu_{M_\ell,N_\ell}^w\) have weakly* convergent subsequences. Moreover, we can show that
\begin{lem}\label{invariant measure property}
    If $\mu$ is the limit of weakly* convergent subsequences of some $\mu_{M_l,N_l}^w $, then 
    \begin{itemize}
        \item $\mu$ is invariant under \(S\) and \(T\);
        \item  Let  $\pi:X\to K$ be the projection. Then $\pi_*(\mu)$ the product of the uniform measures on \((\Z/p\Z)^2\). That is, $$\pi_*\mu=\prod_{p\text{ prime }}U_p,$$
        where $U_p $ is the uniform probability measure on the finite set $(\Z/p\Z)^2$.
    \end{itemize}
\end{lem}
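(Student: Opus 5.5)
The plan is a standard Furstenberg-correspondence argument, using the equivariance \eqref{Psi equation} for invariance and an explicit residue count for the projection.

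\emph{Invariance.} Fix a continuous $\phi:X\to\R$. By \eqref{Psi equation} with $(u,v)=(1,0)$,
\[
\int\phi\circ S\,d\mu^w_{M,N}=\frac{1}{MN}\sum_{m=1}^M\sum_{n=1}^N\phi(\Psi_w(m+1,n)).
\]
Shifting the summation index $m$ by one, this sum differs from $\int\phi\,d\mu^w_{M,N}$ only in the two boundary columns $m=1$ and $m=M+1$. Hence
\[
\Bigl|\int\phi\circ S\,d\mu^w_{M,N}-\int\phi\,d\mu^w_{M,N}\Bigr|\le \frac{2N\|\phi\|_\infty}{MN}=\frac{2\|\phi\|_\infty}{M}.
\]
The same computation with $(u,v)=(0,1)$ gives a bound $2\|\phi\|_\infty/N$ for $T$. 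Since $S$ and $T$ are continuous, $\phi\circ S$ and $\phi\circ T$ are again continuous. So along the subsequence with $M_\ell,N_\ell\to\infty$, both sides converge under weak* convergence, and we get $\int\phi\circ S\,d\mu=\int\phi\,d\mu$, and likewise for $T$. This holds for all continuous $\phi$, so $\mu$ is $S$- and $T$-invariant.

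\emph{Projection.} The product measure $\prod_p U_p$ on the compact group $K$ is determined by its values on cylinder functions. These depend only on finitely many coordinates $(\rho_p,\sigma_p)_{p\in Q}$, where $Q$ is a finite set of primes, and they are continuous on $K$. (They span a dense subalgebra by Stone--Weierstrass.) It therefore suffices to take indicator functions $\psi=\mathbf 1\{\rho_p\equiv\alpha_p,\ \sigma_p\equiv\beta_p\ \forall p\in Q\}$, which are continuous since the coordinates are discrete. Set $q=\prod_{p\in Q}p$. By the Chinese remainder theorem, $\int\psi\circ\pi\,d\mu^w_{M,N}$ equals
\[
\frac{1}{MN}\,\#\{m\le M: m\equiv\alpha \ (\mathrm{mod}\ q)\}\cdot\#\{n\le N: n\equiv\beta\ (\mathrm{mod}\ q)\}
\]
for suitable residues $\alpha,\beta$ modulo $q$. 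Each count is $M/q+O(1)$, respectively $N/q+O(1)$. So the integral equals $q^{-2}+O(1/M+1/N)$, which tends to $q^{-2}=\prod_{p\in Q}p^{-2}=\int\psi\,d\prod_pU_p$. Since $\psi\circ\pi$ is continuous on $X$, weak* convergence gives $\int\psi\,d\pi_*\mu=\prod_{p\in Q}p^{-2}$. Agreement on all cylinder indicators determines the Borel measure, because these sets form a $\pi$-system generating the Borel $\sigma$-algebra of $K$. Hence $\pi_*\mu=\prod_pU_p$.

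\emph{Main obstacle.} None of these steps is hard. The only point requiring care is that both $M_\ell\to\infty$ and $N_\ell\to\infty$ are needed: $S$-invariance uses $M_\ell\to\infty$ (the boundary error is $2\|\phi\|_\infty/M$), and $T$-invariance uses $N_\ell\to\infty$. The remaining check is that the cylinder indicators are genuinely continuous, so that weak* convergence applies to them.
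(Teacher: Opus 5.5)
Your proposal is correct and follows the same route as the paper. You get invariance from the equivariance $\Psi_w(m+u,n+v)=S^uT^v\Psi_w(m,n)$, making explicit the boundary estimates $2\|\phi\|_\infty/M$ and $2\|\phi\|_\infty/N$ that the paper dismisses as ``clear''; you get the residue marginal from the Chinese remainder theorem on cylinder sets, adding the continuity of cylinder indicators and the $\pi$-system uniqueness step that the paper leaves implicit.
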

\begin{proof}
    By \eqref{Psi equation}, one has that $\Psi_w(m,n)=S^mT^n\Psi(0,0) $. Then by \eqref{mu a M N} 
    $$\mu_{M,N}^w=\frac{1}{MN}\sum_{m=1}^{M}\sum_{n=1}^{N}\delta_{\Psi_w(m,n)}=\frac{1}{MN}\sum_{m=1}^{M}\sum_{n=1}^{N}\delta_{S^mT^n\Psi_w(0,0)}.$$
    As $S$ and $T$ commute and $ \mu$ is the limit of weakly* convergent subsequences of some $\mu_{M_l,N_l}^w $, it is clear that $\mu$ is $S$ and $T$ invariant. 

    Now fix $s\in \N$ and prescribe $$U=\{((x_j)_{j=0}^{ k}, (\rho_p,\sigma_p)_{p }): \rho_{p_i}=b_i, \sigma_{p_i}=c_i \text{ for } 1\leq i\leq s\}.$$ Let $Q=\prod_{i=1}^s p_i$. The Chinese remainder theorem gives unique residues $b,c$ such that $m\equiv b_i(\text{mod}\  p_i), n\equiv c_i(\text{mod}\  p_i)$ for $1\leq i\leq s$ if and only if $$m\equiv b(\text{mod}\  Q), n\equiv c(\text{mod}\  Q).$$
   By \eqref{def of Psi}, 
   $$\delta_{\Psi_w(m,n)}(U)=\mathbf{1}_{m\equiv b(\text{mod}\  Q)}\cdot \mathbf{1}_{n\equiv c(\text{mod}\  Q)}.$$
   Therefore
$$\mu_{M,N}^w(U) 
    =\frac{1}{MN}\sum_{m=1}^M\sum_{n=1}^N\mathbf{1}_{m\equiv b(\text{mod}\  Q)}\cdot \mathbf{1}_{n\equiv c(\text{mod}\  Q)}. $$
    When $M,N\to \infty$, we have that $\mu_{M,N}^a(U)\to \frac{1}{Q^2}=\prod_{i=1}^sp_i^{-2}$, which complete the proof.

\end{proof}
Now we consider the Liouville function  $\lambda$.  We set $\lambda(n)=1$ if $n\leq 0$, then   $\lambda\in \{-1,1\}^{\Z}$. For $m,n>0$, by \eqref{continuous functions} and \eqref{def of Psi} $F(\Psi_\lambda(m,n))=\prod_{j=0}^k\lambda(m+a_jn)$, which implies that
\begin{equation}\label{intergral expression}
    \int Fd\mu^\lambda_{M,N}=D_k(M,N) \text{ and }\int G_pd\mu^\lambda_{M,N}=E_{k,p}(M,N).
\end{equation}

\subsection{Entropy, conditioning, and changes of labels}
The entropy argument below concerns finite random variables, even though the stationary space itself has infinitely many coordinates. We give the finite formulas explicitly so that every change of conditioning and every comparison of probability laws can be checked directly. In this section, all random variables take values in finite sets, and logarithms are natural. One can find the definitions and properties about entropy in e.g. \cite{D11}.

For a finite-valued random variable $U$, if \(U\) has probability mass function \(p_U(u)=\Prob(U=u)\), define the (Shannon) entropy of $U$ by 
\[
\H(U)=-\sum_{u:p_U(u)>0}p_U(u)\log p_U(u).
\]
Terms with zero mass are omitted, equivalently interpreted using \(0\log 0=0\). For a pair \((U,W)\), their joint probability mass function is
$p_{U,V}(u,v)=\Prob(U=u,V=v).$ Together with a value \(w\) of positive probability, write
\[
p_{U\mid W}(u\mid w)=\frac{p_{U,W}(u,w)}{p_W(w)}.
\]
The conditional entropy is
\[
\H(U\mid W)
=\sum_{w:p_W(w)>0}p_W(w)
\left(-\sum_{u:p_{U,W}(u,w)>0}p_{U\mid W}(u\mid w)\log p_{U\mid W}(u\mid w)\right).
\]
Both entropy and conditional entropy are nonnegative. For two probability measures $P,Q $ on a common finite set, write $P\ll   Q$ when $P(z)>0$ implies $Q(z)>0$ for any $z$ in the set. Their relative entropy is defined by
\begin{equation}\label{def of relative entropy}
    \D(P\|Q)=\sum_{z:P(z)>0}P(z)\log\frac{P(z)}{Q(z)},
\end{equation}
which is nonnegative. Indeed, as $-\log t\geq 1-t,$ by written $t=Q(z)/P(z)$ we have
$$\D(P\|Q)\geq \sum_{z:P(z)>0}(P(z)-Q(z))=1-\sum_{z:P(z)>0}Q(z)\geq 0.$$
Let $U,V$ be finite-valued random variables defined on a common probability space. One can define the entropy of $(U,V)$ by 
$$\H(U,V)=-\sum_{(u,v):p_{U,V}(u,v)>0}p_{U,V}(u,v)\log p_{U,V}(u,v).$$
We also have the property of entropy
\begin{equation}\label{entropy property}
    \H(U,V\mid W)=\H(U\mid W)+\H(V\mid U,W).
\end{equation}
The conditional mutual information is defined by 
\begin{equation}\label{def of information}
    \I(U;V\mid W)=\H(U\mid W)-\H(U\mid V,W).
\end{equation}
By \eqref{entropy property} we have 
\begin{equation}\label{def of information 2}
    \begin{aligned}
\I(U;V\mid W)
&=\H(U\mid W)+\H(V\mid W)-\H(U,V\mid W)\\
&=\H(V\mid W)-\H(V\mid U,W);
\end{aligned}
\end{equation}
In particular,
\begin{equation}\label{chain of information}
    \begin{aligned}
\I((U,V);W)
&=\H(U,V)-\H(U,V\mid W)\\
&=\H(U)+\H(V|U)-\H(U\mid W)-\H(V\mid U,W);\\
&=\big(\H(U)-\H(U\mid W)\big)+\big(\H(V|U)-\H(V\mid U,W)\big)\\
&=\I(U;W)+\I(V;W\mid U).
\end{aligned}
\end{equation}
It is also nonnegative. For each \(w\) of positive probability, the contribution of that conditional measure to the first line of \eqref{def of information 2} is
\begin{equation}\label{calculate I}
 \sum_{u,v:p_{U,V\mid W}(u,v\mid w)>0}
p_{U,V\mid W}(u,v\mid w)
\log\frac{p_{U,V\mid W}(u,v\mid w)}
{p_{U\mid W}(u\mid w)p_{V\mid W}(v\mid w)}.
\end{equation}

This follows by writing each marginal entropy as a sum against the joint conditional measures and then collecting the three logarithms. Averaging over \(w\) yields
\begin{equation}\label{def of information 3}
    \I(U;V\mid W)
=\sum_w p_W(w)\D\bigl(p_{U,V\mid W=w}\bigm\|p_{U\mid W=w}\otimes p_{V\mid W=w}\bigr).
\end{equation}

The sum is over \(w\) with \(p_W(w)>0\). Whenever a joint conditional mass at \((u,v)\) is positive, both of its marginal conditional masses are positive; thus the absolute-continuity hypothesis in every divergence in \eqref{def of information 3} holds.

Consequently, adding conditioning can only decrease entropy:
\[
\H(U\mid V,W)\le \H(U\mid W).
\]
Combining this fact with the chain rule gives conditional subadditivity,
\begin{equation}\label{ineq for entropy}
    \H(U,V\mid W)\le \H(U\mid W)+\H(V\mid W). 
\end{equation}
Neither assertion requires independence.

We will need two elementary invariance facts. First, if \(\beta\) is a bijection of the values of \(W\), then
\begin{equation}\label{inv of bijection for entropy}
  \H(U\mid \beta(W))=\H(U\mid W).   
\end{equation}
Indeed, conditioning on \(\beta(W)=\beta(w)\) is precisely conditioning on \(W=w\), with the same probability and the same conditional distribution of \(U\). A bijective relabeling of the values of \(U\) also leaves conditional entropy unchanged, since it merely reindexes the inner sum. This includes a fixed reordering of the coordinates of a random vector.

Second, if a transformation \(A\) preserves the underlying probability measure, then \((U\circ A,W\circ A)\) and \((U,W)\) have the same joint distribution: the probability of the former pair taking the value \((u,w)\) equals the measure of \(A^{-1}\{U=u,W=w\}\). Therefore
\begin{equation}\label{inv of bijection for entropy 2}
    \H(U\circ A\mid W\circ A)=\H(U\mid W). 
\end{equation}
These are distinct observations: \eqref{inv of bijection for entropy} is a relabeling of a conditioning variable, whereas \eqref{inv of bijection for entropy 2} uses invariance of the probability measure.

Finally, a random variable with values in a set of size \(d\) has entropy at most \(\log d\). To see this, compare its measure \(P\) with \(Q\) on that set:
\[
0\le \D(P\|Q)=\log d-\H(U).
\]
In particular, a $\{-1,1\}^{b}$-random variable has entropy at most \(b\log 2\), with no restriction on dependence between the signs. Its conditional entropy has the same upper bound, since conditioning decreases entropy.

\section{A summable estimate over prime intervals}
Proposition \ref{summable estimate} is proved by adapting the conditional entropy method of \cite{TT18}. Their work introduced finite conditional information estimates for correlations of multiplicative functions; we re-execute this framework in the two-parameter rectangular setting.

For $r\in\N_0$, put
\begin{equation}\label{def of H P R}
   H_r=2^r,\qquad
\cP_r=\{p\in\PP:H_r<p\le 2H_r\},\qquad
R_r=|\cP_r|.  
\end{equation}
For $z=((x_j)_{j=0}^{ k}, (\rho_p,\sigma_p)_{p\in\PP})\in X $, we let 
\begin{equation}\label{hatXr}
    \hat{X}_r(z)=(x_j(v))_{0\le j\le k,\;1\le v\le (a_k+1)H_r},
\end{equation}
\begin{equation}\label{VrYr}
    V_r(z)=((\rho_p,\sigma_p))_{p\le H_r},\qquad
Y_p(z)=(\rho_p,\sigma_p), \qquad Y_r(z)=\prod_{p\in \cP_r}Y_p
\end{equation}
\begin{equation}\label{Ir}
    I_r=\I(\hat{X}_r;Y_r\mid V_r).
\end{equation}

The sets \(\cP_r\) are nonempty by Bertrand--Chebyshev theorem (for \(r=0\), the prime is \(2\)). The prime number theorem will later give \(R_r/H_r\asymp 1/(r+1)\); the following estimate does not require that asymptotic.

\begin{prop}\label{summable estimate}
Let \(\mu\) be an \(S\)- and \(T\)-invariant probability measure on \(X\) such that $(\prod_{p}Y_p)_*\mu $ is the product uniform measure of $\prod_{p}(\Z/p\Z)^2$. For each $p\in\PP$, set
\[
e_p=\int G_p\,d\mu,\qquad
q_r=\frac{1}{R_r}\sum_{p\in\cP_r}|e_p|.
\]
Then
\[
\sum_{r=0}^{\infty}\frac{R_r}{H_r}q_r^2\le 50(k+1)(a_k+1)\log 2. 
\]
This proposition holds for every $k\in\N_0$, regardless of parity.
\end{prop}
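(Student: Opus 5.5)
We follow the entropy decrement argument of \cite{TT18}, in two halves: an information budget $\sum_r I_r\le C$, and a lower bound $I_r\gtrsim \frac{R_r}{H_r}q_r^2$ from Pinsker's inequality applied one prime at a time.

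\emph{Step 1 (budget).} The plan is to telescope the conditional entropies $h_r=\H(\hat X_r\mid V_r)$ across scales. Since $V_{r+1}$ contains $(V_r,Y_r)$, conditioning can only lower entropy, so
\[
h_r-I_r=\H(\hat X_r\mid V_r,Y_r)\ge \H(\hat X_r\mid V_{r+1}).
\]
Next we compare $\hat X_{r+1}$, a window of length $2(a_k+1)H_r$, with two consecutive windows of length $(a_k+1)H_r$. The second window is the first one shifted by $S^{(a_k+1)H_r}$. Since $S$ preserves $\mu$ and only translates the residues of $V_{r+1}$, which is a bijective relabeling of the conditioning variable, \eqref{inv of bijection for entropy} and \eqref{inv of bijection for entropy 2} together with subadditivity \eqref{ineq for entropy} give
\[
h_{r+1}\le 2\H(\hat X_r\mid V_{r+1})\le 2(h_r-I_r).
\]
Dividing by the window length, the normalized quantity $\tilde h_r=h_r/((k+1)(a_k+1)H_r)$ is nonincreasing and satisfies
\[
\tilde h_{r+1}\le \tilde h_r-\frac{I_r}{(k+1)(a_k+1)H_r}.
\]
Since $0\le\tilde h_r\le\log 2$, summing over $r$ yields
\[
\sum_r \frac{I_r}{H_r}\le (k+1)(a_k+1)\log 2.
\]
The constant $50$ will absorb the Pinsker constant and the loss in Step 2.

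\emph{Step 2 (Pinsker per prime).} By the chain rule \eqref{chain of information}, $I_r=\I(\hat X_r;(Y_p)_{p\in\cP_r}\mid V_r)$ expands as a sum over $p\in\cP_r$ of terms $\I(\hat X_r;Y_p\mid V_r,(Y_{p'})_{p'<p})$. Because $\pi_*\mu$ is the product uniform measure, $Y_p$ is independent of all the other residues. Hence $\H(Y_p\mid \text{other residues})=\H(Y_p)$, and we can reverse the expansion to get
\[
I_r\ge\sum_{p\in\cP_r}\I(\hat X_r;Y_p\mid V_r).
\]
Fix $p$. We use $S^uT^v$-invariance for shifts with $0\le u+a_jv$ and $u+a_jv+1\le (a_k+1)H_r$; the choice $u,v\in[0,H_r)$ suffices. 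Set $F_{u,v}=\prod_j x_j(u+a_jv+1)$, which is a function of $\hat X_r$. Averaging $G_p$ over $(u,v)\in[p]^2$ in the invariant measure gives
\[
e_p=\E_{u,v\in[p]}\E_\mu\bigl[(p^2\mathbf 1_{\rho_p=-u,\sigma_p=-v}-1)F_{u,v}\bigr].
\]
This says $|e_p|$ measures the correlation between the function $\hat X_r\mapsto \E_{u,v}F_{u,v}\,p^2\mathbf 1_{(-u,-v)=Y_p}$ and $Y_p$. Since $\H(Y_p)=2\log p$ is as large as possible, Pinsker's inequality applied to $p_{\hat X_r,Y_p}$ versus $p_{\hat X_r}\otimes U_p$, via \eqref{def of information 3}, should bound $|e_p|$ by roughly $p\sqrt{2\I(\hat X_r;Y_p\mid V_r)}$. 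That is too lossy. The better route is to aggregate over the whole box: the $p^2$ shifts give a test function bounded by $1$ in total variation. We then expect
\[
|e_p|^2\lesssim \frac{1}{H_r}\,\I(\hat X_r;Y_p\mid V_r)\cdot H_r\cdot\frac{1}{1},
\]
or, after rescaling, $\I(\hat X_r;Y_p\mid V_r)\ge c\,|e_p|^2$ with $c$ around $1/2$.

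\emph{Step 3 (combine).} Summing Step 2 over $p\in\cP_r$ and applying Cauchy--Schwarz,
\[
I_r\ge c\sum_{p\in\cP_r}|e_p|^2\ge cR_rq_r^2.
\]
Inserting this into Step 1 gives $\sum_r R_rq_r^2/H_r\lesssim (k+1)(a_k+1)\log2$. The main obstacle is Step 2: getting a lower bound for the information that is linear in $|e_p|^2$ with no loss in $p$. The right way is to condition on $V_r$, which fixes the residues of the small primes, and use that within a box of side about $p\le 2H_r$ the shifted test functions $F_{u,v}$ and the residues $Y_p$ are related by the $(\Z/p\Z)^2$ translation structure. That lets Pinsker be applied to a single bounded test function, $\E_{u,v}F_{u,v}$ paired with $Y_p$, with constant $2$. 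Checking that the window lengths match, with all indices $u+a_jv+1$ lying in $[1,(a_k+1)H_r]$ when $u,v<p\le 2H_r$, may force halving the range and cost the factor that explains the constant $50$.
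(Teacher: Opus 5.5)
Your Step 1 is the paper's information-budget lemma: the same splitting of $\hat{X}_{r+1}$ into $\hat{X}_r$ and $\hat{X}_r\circ S^{(a_k+1)H_r}$, the same relabeling/invariance facts, and the same subadditivity. It gives $\sum_r I_r/H_r\le (k+1)(a_k+1)\log 2$, not $\sum_r I_r\le C$ as your opening sentence says. Your chain-rule derivation of $I_r\ge\sum_{p\in\cP_r}\I(\hat{X}_r;Y_p\mid V_r)$ is correct and equivalent to the paper's entropy-sum argument, and Step 3 is the paper's Cauchy--Schwarz.

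\textbf{The gap} is the single-prime inequality $|e_p|^2\le C\,\I(\hat{X}_r;Y_p\mid V_r)$ with $C$ absolute, which you only ``expect''. The version you describe does not work.
\begin{itemize}
\item Averaging over $(u,v)\in[p]^2$ uses signs $x_j(u+a_jv+1)$ with index up to about $(a_k+1)p$. Since $p$ can be nearly $2H_r$, this can approach $2(a_k+1)H_r$. So the averaged test function is not a function of $(\hat{X}_r,Y_p)$, and Pinsker cannot be applied to the law of $(\hat{X}_r,Y_p)$. This contradicts your own remark that $u,v\in[0,H_r)$ is the admissible range.
\item You never isolate the property that lets Pinsker bound $|e_p|$ itself: the test function must have mean zero under the decoupled law. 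Your description of the test function drops the $-1$.
\item The claims ``bounded by $1$'' and $c\approx 1/2$ are unsupported.
\item Minor: with $x_j(u+a_jv+1)$ the matching indicator is $\rho_p\equiv-(u+1)$, not $-u$.
\end{itemize}

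\textbf{The missing argument} is short. Average only over the box $[H_r]^2$:
\[
W_p=\frac{1}{H_r^2}\sum_{u,v=1}^{H_r}\Bigl(\prod_{j=0}^{k} x_j(u+a_jv)\Bigr)\bigl(p^2\mathbf{1}_{\rho_p+u\equiv\sigma_p+v\equiv 0\,(\mathrm{mod}\ p)}-1\bigr),
\]
which depends only on $(\hat{X}_r,Y_p)$. Three facts then hold.
\begin{itemize}
\item $\E_\mu W_p=e_p$ by $S^uT^v$-invariance.
\item Since $p>H_r$, for each $z$ at most one $(u,v)\in[H_r]^2$ solves both congruences, so $\|W_p\|_\infty\le 1+p^2/H_r^2\le 5$. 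This is the precise form of your ``one hit per box'' heuristic, and it survives shrinking the range exactly because $p\le 2H_r$.
\item Under $Q_p=(\hat{X}_r,V_r)_*\mu\otimes U_p$, every summand has mean $\E\bigl[\prod_j x_j(u+a_jv)\bigr]\,(p^2\cdot p^{-2}-1)=0$.
\end{itemize}
Pinsker then gives $|e_p|\le 2\|W_p\|_\infty\sqrt{\D(P_p\|Q_p)/2}$, that is $|e_p|^2\le 50\,\D(P_p\|Q_p)$, where $P_p=(\hat{X}_r,V_r,Y_p)_*\mu$. Moreover $\D(P_p\|Q_p)=\I((\hat{X}_r,V_r);Y_p)=\I(\hat{X}_r;Y_p\mid V_r)$, because $Y_p$ is independent of $V_r$ under the product-uniform residue marginal. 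If you prefer your unconditioned law of $(\hat{X}_r,Y_p)$, you still need $\I(\hat{X}_r;Y_p)\le\I((\hat{X}_r,V_r);Y_p)=\I(\hat{X}_r;Y_p\mid V_r)$, from the same independence. The argument yields $c=1/50$, which is exactly what the constant $50(k+1)(a_k+1)\log 2$ in the statement requires.
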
\subsection{The symbol blocks and their information budget}

All random variables in this section, unless a different measure is specified, are defined on \((X,\mu)\). Set

For definiteness the coordinates can be ordered lexicographically, with primes ordered increasingly. The vector \(V_0\) has no coordinates and is a constant random variable. With these conventions, \(V_{r+1}\) is the concatenation of \(V_r\) and \(Y_r\), up to a fixed reordering.

 The entropy bound gives
\begin{equation}\label{bounded for H}
   0\le \H(\hat{X}_r\mid V_r)\le \H(\hat{X}_r)\le (k+1)(a_k+1)H_r\log 2.  
\end{equation}

When \(u,v\in[H_r]\) and \(0\le j\le k\), the translated index \(u+a_jv\) lies in \([(a_k+1)H_r]\). Thus a two-parameter family of \(H_r^2\) products will depend on only \((k+1)(a_k+1)H_r\) signs. The number of signs grows linearly in \(H_r\). Now we have the information budget:

\begin{lem}\label{information budget}
With the preceding definitions,
\begin{equation}\label{bounded for sum}
\sum_{r=0}^{\infty}\frac{I_r}{H_r}\le (k+1)(a_k+1)\log 2.     
\end{equation}

Only \(S\)-invariance is needed for this lemma; neither \(T\)-invariance nor the product uniform measure of the residues is used.
\end{lem}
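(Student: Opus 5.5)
The plan is the standard entropy-decrement telescoping argument. Normalize
\[
h_r=\frac{\H(\hat{X}_r\mid V_r)}{H_r},
\]
so that by \eqref{bounded for H} we have $0\le h_r\le (k+1)(a_k+1)\log 2$ for every $r$. The goal is the one-step inequality
\[
h_{r+1}\le h_r-\frac{I_r}{H_r}\qquad(r\ge 0).
\]
Summing it over $0\le r<R$ gives $\sum_{r<R} I_r/H_r\le h_0-h_R\le h_0\le (k+1)(a_k+1)\log 2$. Letting $R\to\infty$ then yields \eqref{bounded for sum}.

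\textbf{Step 1: rewrite $I_r$.} Since $V_{r+1}$ is $(V_r,Y_r)$ up to a fixed reordering, \eqref{def of information} together with \eqref{inv of bijection for entropy} gives
\[
I_r=\H(\hat{X}_r\mid V_r)-\H(\hat{X}_r\mid V_{r+1}).
\]

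\textbf{Step 2: split the longer block.} Put $L=(a_k+1)H_r$. Since $H_{r+1}=2H_r$, the block $\hat{X}_{r+1}$ consists of the coordinates $x_j(v)$ with $1\le v\le 2L$. Up to reordering, it is the pair $(\hat{X}_r,\hat{X}'_r)$, where $\hat{X}'_r=(x_j(v))_{0\le j\le k,\,L<v\le 2L}$. By conditional subadditivity \eqref{ineq for entropy},
\[
\H(\hat{X}_{r+1}\mid V_{r+1})\le \H(\hat{X}_r\mid V_{r+1})+\H(\hat{X}'_r\mid V_{r+1}).
\]

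\textbf{Step 3: the shifted half has the same conditional entropy.} This is the only place where $S$-invariance enters, and it needs care with the residue coordinates. By the definition \eqref{def of S T}, $S^L$ shifts every $x_j$ by $\tau^L$, so $\hat{X}'_r=\hat{X}_r\circ S^L$. At the same time $S^L$ sends $\rho_p\mapsto\rho_p+L$ and leaves $\sigma_p$ fixed. Hence $V_{r+1}\circ S^L=\beta(V_{r+1})$, where $\beta$ is the bijection of $\prod_{p\le H_{r+1}}(\Z/p\Z)^2$ that adds $L$ to each $\rho_p$. Using \eqref{inv of bijection for entropy} and then \eqref{inv of bijection for entropy 2} (with $A=S^L$, which preserves $\mu$), we get
\[
\H(\hat{X}'_r\mid V_{r+1})=\H(\hat{X}_r\circ S^L\mid \beta^{-1}(V_{r+1}\circ S^L))=\H(\hat{X}_r\circ S^L\mid V_{r+1}\circ S^L)=\H(\hat{X}_r\mid V_{r+1}).
\]

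\textbf{Step 4: conclude.} Combining Steps 1–3 gives
\[
\H(\hat{X}_{r+1}\mid V_{r+1})\le 2\H(\hat{X}_r\mid V_{r+1})=2\H(\hat{X}_r\mid V_r)-2I_r.
\]
Dividing by $H_{r+1}=2H_r$ gives exactly $h_{r+1}\le h_r-I_r/H_r$, and the telescoping above finishes the proof.

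Only $S$ was used, as the statement of the lemma asserts: neither $T$ nor the distribution of the residues enters. The main point to get right is Step 3. One must make sure that $S^L$ moves the conditioning variable $V_{r+1}$ only by a deterministic bijection of its values, rather than mixing it with new information. This holds because $S$ acts on the residues by adding the fixed constant $1$ to each $\rho_p$.
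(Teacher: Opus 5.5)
Your proposal is correct and follows the paper's proof step for step. It uses the same splitting of $\hat{X}_{r+1}$ into $(\hat{X}_r,\hat{X}_r\circ S^{d})$ with $d=(a_k+1)H_r$, the same observation that $V_{r+1}\circ S^{d}$ is a bijective relabeling of $V_{r+1}$, conditional subadditivity, and the same telescoping of the normalized entropies $\H(\hat{X}_r\mid V_r)/H_r$.
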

\begin{proof}
   Fix \(r\) and put \(d=(a_k+1)H_r\). For $z=((x_j)_{j=0}^{ k}, (\rho_p,\sigma_p)_{p }) $, the definition \eqref{def of S T} of action \(S\)  gives, for each symbolic coordinate  $x_j\in \{-1,1\}^{\Z}$,
\[
S^d z(x_j(v))=z(x_j(v+d)).
\]
Consequently \(\hat{X}_r\circ S^d\) consists of the signs with \(0\le j\le k\) and \(d+1\le v\le 2d\). The vector \(\hat{X}_{r+1}\) uses exactly the indices \(1\le v\le 2d\) for each \(j\). Splitting these indices into the two intervals shows that \(\hat{X}_{r+1}\) is a bijective reordering of
\[
(\hat{X}_r,\hat{X}_r\circ S^d).
\]

Let \(\beta_d\) act on values of \(V_{r+1}\) by
\[
\beta_d((\rho_p,\sigma_p)_{p\le 2H_r})
=(\rho_p+d,\sigma_p)_{p\le 2H_r}.
\]
Each addition takes place modulo its prime. The map is a bijection, with inverse \(\beta_{-d}\), and
\[
V_{r+1}\circ S^d=\beta_d(V_{r+1}).
\]
By \eqref{inv of bijection for entropy} and \eqref{inv of bijection for entropy 2}, one has that
\[
\H(\hat{X}_r\circ S^d\mid V_{r+1})
=\H(\hat{X}_r\circ S^d\mid V_{r+1}\circ S^d)
=\H(\hat{X}_r\mid V_{r+1}).
\]
Now we use the conditional subadditivity \eqref{ineq for entropy} to see that
\[
\begin{aligned}
\H(\hat{X}_{r+1}\mid V_{r+1})
&=\H(\hat{X}_r,\hat{X}_r\circ S^d\mid V_{r+1})\\
&\le \H(\hat{X}_r\mid V_{r+1})+\H(\hat{X}_r\circ S^d\mid V_{r+1})\\
&=2\H(\hat{X}_r\mid V_{r+1})\\
&=2\H(\hat{X}_r\mid V_r)-2I_r.
\end{aligned}
\]
The last line uses \(V_{r+1}=(V_r,Y_r)\) and the definition \eqref{def of information}. 

Normalize by the block length by setting
\[
b_r=\frac{\H(\hat{X}_r\mid V_r)}{H_r}.
\]
Since \(H_{r+1}=2^{r+1}=2H_r\), we have that
\[
b_{r+1}\le b_r-\frac{I_r}{H_r}.
\]
For any integer \(L\ge 0\), summing this inequality over \(0\le r\le L\), we obtain that
\[
\sum_{r=0}^{L}\frac{I_r}{H_r}\le b_0-b_{L+1}\le b_0=\frac{H(\hat{X}_0\mid V_0)}{H_0}\le (k+1)(a_k+1)\log 2.
\]
Here we use the fact that $b_{L+1}\geq 0$ and \eqref{bounded for H}. As $I_r,H_r\geq 0$, letting $L\to \infty$ we have \eqref{bounded for sum}.
\end{proof}

Now we can prove Proposition \ref{summable estimate}. We will use the notions \eqref{def of H P R},\eqref{hatXr}, \eqref{VrYr},\eqref{Ir} of $H_r,\cP_r,R_r,\hat{X}_r,V_r,Y_p,Y_r$ and $I_r$.
\begin{proof}[Proof of Proposition \ref{summable estimate}]
 For $z=((x_j)_{j=0}^{ k}, (\rho_p,\sigma_p)_{p\in \PP })\in X $, recall the definition of $G_p=(p^2\mathbf{1}_{\rho_p=\sigma_p=0}-1)\prod_{j=0}^kx_j(0)$, let $e_p=\int G_pd\mu$.

\medskip

 \textbf{Step 1: A single-prime.}
 
 Let $p\in \mathcal{P}_r$ be given.
 
 Define 
 $$W_p(z) =\frac{1}{H_r^2}\sum_{u=1}^{H_r}\sum_{v=1}^{H_r}
\left(\prod_{j=0}^{k}x_j(u+a_jv)\right)
\left(p^2\mathbf{1}_{\rho_p+u=\sigma_p+v=0(\text{mod}\  p)}-1\right).$$ As $\mu $ is $S,T$-invariant, one has that for any $u,v $,
$$\int \left(\prod_{j=0}^{k}x_j(u+a_jv)\right)
\left(p^2\mathbf{1}_{\rho_p+u=\sigma_p+v=0(\text{mod}\  p)}-1\right)d\mu=\int G_pd\mu=e_p.$$
Therefore $E_\mu(W_p)=e_p$. As $H_r<p\leq 2H_r$, for any $z\in X$, there exists at most one $(u,v) $ with $1\leq u,v\leq H_r$ such that $\rho_p+u=\sigma_p+v=0(\text{mod}\  p) $. Therefore $$|W_p(z)|\leq \frac{1}{H_r^2}\sum_{u=1}^{H_r}\sum_{v=1}^{H_r}|1|+\frac{1}{H_r^2}p^2\leq 5.$$
Let $U_p$ be the uniform measure on $(\Z/p\Z)^2 $. Let $P_p=(\hat{X}_r\times V_r\times Y_r)_*\mu$ and let $Q_p=(\hat{X}_r\times V_r)_*\mu\times U_p $, which are probability measures on the same finite set. 
By the definition of $P_p $, we have that $\E_{P_p}W_p=\E_{\mu}W_p=e_p $. 

We have that $ E_{Q_p}W_p=0$ since for any fixed $u,v $, precisely one of the $ p^2$ satisfies $(\rho_p,\sigma_p)\equiv (-u,-v)(\mod p) $. As $U_p$ is the uniform measure,
$$ E_{Q_p}W_p=p^2p^{-2}-1=0.$$ 
 If $P_p(x,w,y)>0$, then $(\hat{X}_r\times V_r)_*\mu (x,w)>0$ and therefore $Q_p(x,w,y)>0$. Thus we have $P_p\ll Q_p$. By Pinsker's inequality (see, for example, \cite[Example 4.9]{Han16}),
\[
\|P_p-Q_p\|_{TV}\le \sqrt{\frac12 \D(P_p\|Q_p)}.
\]
where $\|P_p-Q_p\|_{TV}=\frac{1}{2}\sum_{(x,w,y)}|P_p(x,w,y)-Q_p(x,w,y)|$.
Hence
\[
|e_p|^2
=|\E_{P_p}W_p|^2=
|\E_{P_p}W_p-\E_{Q_p}W_p|^2
\le 4\|W_p\|^2_\infty \|P_p-Q_p\|^2_{TV}
\le 50 \D(P_p\|Q_p).
\]

 We now prove that  \(\D(P_p\|Q_p)=\I(\hat{X}_r;Y_p\mid V_r)\). Denote $P_{X,V}=(\hat{X}_r\times V_r)_*\mu $, then $$P_p(x,w,y)=P(X,V)(x,w)P_p(y|(x,w))  \text{ and }Q_p=P(X,V)(x,w) U_p(y).$$ Thus by the definition of $D(P_q\| Q_p)$ (see \eqref{def of relative entropy}), we have that
\begin{equation*}
    \begin{array}{ll}{\displaystyle D(P_q\| Q_p)=\sum_{(x,w,y):P_p(x,w,y)>0}P_p(x,w,y)\log \frac{P_p(x,w,y)}{P_{X,V}(x,w)U_p(y)}}.
    \end{array}
\end{equation*} 
 By the assumption of this proposition, $(\prod_{p}Y_p)_*\mu $ is the product uniform measure. Then ${(Y_p)}_* \mu=U_p$. By \eqref{calculate I} we have $ D(P_q\| Q_p)=\I((\hat{X}_r,V_r);Y_p)$.

 As $ p>H_r$,  we have that $V_r $ and $Y_p$ are independent w.r.t. $\mu$. Then $ H(V_r\mid Y_p)=H(V_r)$, $\I(V_r;Y_p)=H(V_r)-H(V_r\mid Y_p)=0$ and therefore by \eqref{chain of information}
\[
\D(P_p\|Q_p)=\I((\hat{X}_r,V_r);Y_p)=\I(V_r;Y_p)+\I(\hat{X}_r;Y_p\mid V_r)=\I(\hat{X}_r;Y_p\mid V_r).
\] We have 
\begin{equation}\label{ep and DPQ}
    |e_p|^2\le 50 \D(P_p\|Q_p)= 50\I(\hat{X}_r;Y_p\mid V_r)
\end{equation}

\medskip

\noindent\textbf{Step 2: Summing over primes.}
Recall that  $(\prod_{p}Y_p)_*\mu $ is the product uniform measure. Thus for all \(p\in\mathcal P_r\), \(Y_p\) are mutually independent and independent of \(V\). Then we have
\[
\H(Y_r\mid V_r)=\sum_{p\in\mathcal P_r}\H(Y_p\mid V_r).
\]
On the other hand, by conditional subadditivity,
\[
\H(Y_r\mid \hat{X}_r,V_r)\le \sum_{p\in\mathcal P}\H(Y_p\mid \hat{X}_r,V_r).
\]
Subtracting the two inequalities gives
\[
\sum_{p\in\mathcal P_r}\I(\hat{X}_r;Y_p\mid V_r)
=
\sum_{p\in\mathcal P_r}\bigl(\H(Y_p\mid V)-\H(Y_p\mid X,V)\bigr)
\le \H(Y_r\mid V)-\H(Y_r\mid X,V)
=
\I(X;Y_r\mid V)=I_r.
\]
By \eqref{ep and DPQ} we have 
\[
\sum_{p\in\mathcal P_r}|e_p|^2
\le 50\sum_{p\in\mathcal P_r}\I(\hat{X}_r;Y_p\mid V_r)
\le 50 I_r.
\]
By Lemma \ref{information budget},
\[
\sum_{r=0}^\infty \frac{I_r}{H_r}\le (k+1)(a_k+1)\log 2.
\]
Then we have
\[
\sum_{r=0}^\infty \frac{1}{H_r}\sum_{p\in\mathcal P_r}|e_p|^2
\le 50\sum_{r=0}^\infty \frac{I_r}{H_r}
\le 50(k+1)(a_k+1)\log 2.
\]
Finally, by Cauchy--Schwarz inequality,
\[
q_r^2=\left(\frac{1}{R_r}\sum_{p\in\mathcal P_r}|e_p|\right)^2
\le \frac{1}{R_r}\sum_{p\in\mathcal P_r}|e_p|^2.
\]
Thus we have that 
\[
\sum_{r=0}^{\infty}\frac{R_r}{H_r}q_r^2\leq \sum_{r=0}^{\infty}\frac{1}{H_r}\sum_{p\in\mathcal P_r}|e_p|^2\le 50(k+1)(a_k+1)\log 2.
\]
 \end{proof}

\section{Constructing one continuous scale function}\label{Sec: scale function}

The entropy estimate is stated for a fixed invariant measure. The arithmetic identity compares different rectangles. We connect these two settings by extracting all fixed simultaneous dilations of a sequence of rectangles at once. The resulting function will be continuous in the logarithm of the dilation.

Recall that $$ D_k(M,N)=\E_{(m,n)\in [M]\times [N]}\prod_{j=0}^{k}\lambda(m+a_jn)$$  and
$$
    E_{k,p}(M,N)
=\E_{(m,n)\in [M]\times [N]}
(p^2\mathbf{1}_{p\mid m,p\mid n}-1)
\prod_{j=0}^{k}\lambda(m+a_jn).
$$
The factor \(p^2\) compensates for the density \(p^{-2}\) of the sublattice. Thus \(E_{k,p}\) measures the difference between the normalized restricted average and the original average.

The floor function, denoted by 
$\lfloor x \rfloor$, is defined for every real number 
$x$ as the greatest integer less than or equal to $x$. Suppose \(M,N\ge p\), and put \(M_p=\lfloor M/p\rfloor\), \(N_p=\lfloor N/p\rfloor\). Here and in the following, when we use $ \lfloor M/p\rfloor, \lfloor N/p\rfloor$, we always assume that $M,N>p $. Changing variables in the restricted sum gives
\[
\begin{aligned}
E_{k,p}(M,N)+D_k(M,N)
&=\frac{p^2}{MN}\sum_{1\leq m\le M,1\le n\le N}\mathbf{1}_{p\mid m,p\mid n}\prod_{j=0}^{k}\lambda(m+a_jn)\\
&=\frac{p^2}{MN}\sum_{u=1}^{M_p}\sum_{v=1}^{N_p}\prod_{j=0}^{k}\lambda(pu+a_jpv)\\
&=\frac{p^2}{MN}\sum_{u=1}^{M_p}\sum_{v=1}^{N_p}(\prod_{j=0}^{k}\lambda(p))\prod_{j=0}^{k}\lambda(u+a_jv)\\
&=-\frac{p^2}{MN}\sum_{u=1}^{M_p}\sum_{v=1}^{N_p}\prod_{j=0}^{k}\lambda(u+a_jv)
=-\frac{p^2M_pN_p}{MN}D_k(M_p,N_p).
\end{aligned}
\]
Consequently,
\begin{equation}\label{Ekp and Dk}
   E_{k,p}(M,N)=-c_p(M,N)D_k(M_p,N_p)-D_k(M,N),\qquad
c_p(M,N)=\frac{p^2M_pN_p}{MN}.  
\end{equation}

Write $M=pM_p+r_M$ and $N=pN_p+r_N$, with $0\leq r_M,r_N<p$. Then
\[
c_p(M,N)=\left(1-\frac{r_M}{M}\right)\left(1-\frac{r_N}{N}\right),
\]
so
\begin{equation}\label{bounded fof cp}
 0\le 1-c_p(M,N)
=\frac{r_M}{M}+\frac{r_N}{N}-\frac{r_Mr_N}{MN}
\le \frac{p-1}{M}+\frac{p-1}{N}.    
\end{equation}

In particular, \(c_p(M,N)\to 1\) when \(p\) is fixed and both side lengths tend to infinity.

We first give the elementary estimate that supplies this continuity.

\begin{lem}\label{Comparison of nested rectangles lemma}
Let \(b:\N^2\to[-1,1]\) and
\[
B(M,N)=\frac{1}{MN}\sum_{m\le M,n\le N}b(m,n).
\]
If \(1\le M'\le M\) and \(1\le N'\le N\), then
\begin{equation}\label{Comparison of nested rectangles}
 |B(M,N)-B(M',N')|\le 2\left(1-\frac{M'N'}{MN}\right).   
\end{equation}
\end{lem}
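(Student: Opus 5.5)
Write $\Sigma=\sum_{m\le M,n\le N}b(m,n)$ and $\Sigma'=\sum_{m\le M',n\le N'}b(m,n)$. The rectangle $[M']\times[N']$ sits inside $[M]\times[N]$, so we split $\Sigma=\Sigma'+\Delta$. Here $\Delta$ is the sum of $b$ over the complementary region $([M]\times[N])\setminus([M']\times[N'])$. That region has exactly $MN-M'N'$ lattice points, and $|b|\le 1$, so $|\Delta|\le MN-M'N'$. This is the only input about $b$ that we use.

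Next we write the difference of normalized averages as
\[
B(M,N)-B(M',N')=\frac{\Sigma'+\Delta}{MN}-\frac{\Sigma'}{M'N'}
=\frac{\Delta}{MN}+\Sigma'\left(\frac{1}{MN}-\frac{1}{M'N'}\right).
\]
We bound the two terms separately.
\begin{itemize}
\item For the first term, the bound on $\Delta$ gives $|\Delta|/(MN)\le 1-\frac{M'N'}{MN}$.
\item For the second term, $|\Sigma'|\le M'N'$ and $\frac{1}{M'N'}-\frac{1}{MN}\ge 0$. So it is at most $M'N'\cdot\frac{MN-M'N'}{M'N'MN}=1-\frac{M'N'}{MN}$.
\end{itemize}
Adding the two bounds with the triangle inequality gives \eqref{Comparison of nested rectangles} with constant $2$.

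There is no real obstacle here. The only point needing care is counting the complementary region exactly as $MN-M'N'$ points, rather than bounding it by two strips, which would give a worse constant. This is immediate because $[M']\times[N']\subseteq[M]\times[N]$ when $M'\le M$ and $N'\le N$. The hypotheses $M',N'\ge1$ ensure that $B(M',N')$ is defined, i.e. the denominator $M'N'$ is nonzero.
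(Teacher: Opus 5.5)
Your proof is correct and is essentially the paper's argument: with $\alpha=M'N'/(MN)$ and $C$ the average of $b$ over the complementary region, your two terms are exactly $(1-\alpha)C$ and $-(1-\alpha)B(M',N')$, so your estimate is the paper's identity $B(M,N)-B(M',N')=(1-\alpha)\bigl(C-B(M',N')\bigr)$ bounded termwise. Working with the unnormalized sum $\Delta$ has the small advantage that you do not need the paper's separate case $\alpha=1$, where $C$ would be undefined.
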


\begin{proof}
Let \(\alpha=M'N'/(MN)\). If \(\alpha=1\), the two rectangles coincide. Otherwise, let \(C\) be the average of \(b\) over the complement of \([M']\times[N']\) in \([M]\times[N]\). Decomposing the average into contributions from \([M']\times[N']\) and \(C\), we obtain
\[
B(M,N)=\alpha B(M',N')+(1-\alpha)C.
\]
Hence
\[
B(M,N)-B(M',N')=(1-\alpha)(C-B(M',N')).
\]
Both averages on the right lie in \([-1,1]\), so their difference has absolute value at most \(2\).
\end{proof}
\begin{lem}\label{Compactness of the profiles lemma}
For any sequence \(M_\ell,N_\ell\to\infty\), one can choose a subsequence, denoted in the same way, on which
\begin{equation}\label{Compactness of the profiles}
    f(t)=\lim_{\ell\to\infty}D_k(\lfloor e^{-t}M_\ell\rfloor,\lfloor e^{-t}N_\ell\rfloor)
\end{equation}
exists for every \(t\in\R\). The convergence is uniform on every compact interval, and
\begin{equation}\label{Compactness of the profiles bounded}
 |f(t)|\le 1,\qquad |f(t)-f(s)|\le 4|t-s|\qquad (s,t\in\R).    
\end{equation}
\end{lem}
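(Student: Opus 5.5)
The plan is an Arzelà--Ascoli style diagonal argument: an equicontinuity estimate derived from Lemma \ref{Comparison of nested rectangles lemma}, pointwise convergence on a countable dense set, and then an extension to all of $\R$. Throughout, write $M_\ell(t)=\lfloor e^{-t}M_\ell\rfloor$, $N_\ell(t)=\lfloor e^{-t}N_\ell\rfloor$ and $g_\ell(t)=D_k(M_\ell(t),N_\ell(t))$. For $t$ in a fixed compact interval $[-A,A]$ and $\ell$ large, both side lengths are at least $1$, so $g_\ell$ is defined on $[-A,A]$ for all but finitely many $\ell$. Trivially $|g_\ell|\le 1$.

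\textbf{Step 1: approximate Lipschitz bound.} Let $s<t$. Since $\lfloor\cdot\rfloor$ is monotone, $M_\ell(t)\le M_\ell(s)$ and $N_\ell(t)\le N_\ell(s)$. Lemma \ref{Comparison of nested rectangles lemma}, applied with $b(m,n)=\prod_{j}\lambda(m+a_jn)$, then gives
\[
|g_\ell(t)-g_\ell(s)|\le 2\Bigl(1-\frac{M_\ell(t)N_\ell(t)}{M_\ell(s)N_\ell(s)}\Bigr).
\]
Next use $e^{-t}M_\ell-1<M_\ell(t)$ and $M_\ell(s)\le e^{-s}M_\ell$. These give
\[
\frac{M_\ell(t)}{M_\ell(s)}\ge e^{-(t-s)}-\frac{e^{s}}{M_\ell},
\]
and the same bound holds for the $N$ ratio. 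Multiplying the two bounds, and using $1-e^{-2h}\le 2h$, we get
\[
|g_\ell(t)-g_\ell(s)|\le 4|t-s|+\varepsilon_\ell(A),\qquad \varepsilon_\ell(A)\to 0
\]
uniformly for $s,t\in[-A,A]$, where $\varepsilon_\ell(A)=O\bigl(e^{A}(1/M_\ell+1/N_\ell)\bigr)$. This error term is the one point needing care: the floors only give equicontinuity up to a vanishing error, and that error must be tracked so it disappears in the limit.

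\textbf{Step 2: diagonal extraction.} The sequences $(g_\ell(q))_\ell$, for rational $q$, lie in $[-1,1]$. Enumerate $\mathbb{Q}$ and apply a Cantor diagonal argument to obtain a subsequence along which $g_\ell(q)$ converges for every $q\in\mathbb{Q}$; call the limit $f(q)$. Passing to the limit in Step 1 gives $|f(q)-f(q')|\le 4|q-q'|$ and $|f(q)|\le1$ for rationals.

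\textbf{Step 3: extension and uniform convergence.} Extend $f$ to $\R$ by continuity; the extension is $4$-Lipschitz and bounded by $1$. Fix $t\in[-A,A]$ and $\eta>0$, and pick a finite set of rationals forming an $\eta$-net of $[-A,A]$. Combining Step 1, convergence at the finitely many net points, and the Lipschitz bound for $f$ yields
\[
\sup_{t\in[-A,A]}|g_\ell(t)-f(t)|\le 8\eta+\varepsilon_\ell(A)+\max_{q\in\mathrm{net}}|g_\ell(q)-f(q)|.
\]
Hence
\[
\limsup_{\ell\to\infty}\sup_{t\in[-A,A]}|g_\ell(t)-f(t)|\le 8\eta.
\]
Since $\eta$ is arbitrary, $g_\ell\to f$ uniformly on $[-A,A]$. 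In particular the limit in \eqref{Compactness of the profiles} exists for every $t\in\R$, and \eqref{Compactness of the profiles bounded} holds.
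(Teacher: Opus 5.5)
Your proposal is correct and follows essentially the same route as the paper. Both arguments use an approximate $4$-Lipschitz bound from Lemma~\ref{Comparison of nested rectangles lemma} with a vanishing floor error, diagonal extraction over $\mathbb{Q}$, Lipschitz extension to $\R$, and a finite-net argument for locally uniform convergence; the only cosmetic difference is that you use the one-sided bound $M_\ell(t)/M_\ell(s)\ge e^{-(t-s)}-e^{s}/M_\ell$ instead of the paper's two-sided fractional-part estimate, which suffices because only a lower bound on the area ratio is needed.
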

\begin{proof}
Write
\[
M_\ell(t)=\lfloor e^{-t}M_\ell\rfloor,\qquad
N_\ell(t)=\lfloor e^{-t}N_\ell\rfloor.
\]
Define \(f_\ell(t)=D_k(M_\ell(t),N_\ell(t))\) when $M_\ell(t),N_\ell(t)\geq 1$, and set \(f_\ell(t)=0\) otherwise. On any fixed \(t\in [-T,T]\) we have $M_\ell(t),N_\ell(t)\geq 1$ for all sufficiently large \(\ell\).

We begin with an explicit estimate including the floor errors. Fix \(-T\le s\le t\le T\), put \(\alpha=e^{-(t-s)}\), and suppose \(M_\ell,N_\ell\ge 2e^T\). Using the fractional part notation \(\{x\}=x-\lfloor x\rfloor\) and the fact that $\alpha \lfloor x\rfloor=\alpha x-\alpha\{x\}=\lfloor\alpha  x\rfloor+\{\alpha x\}-\alpha\{x\}$, we have
\[
M_\ell(t)-\alpha M_\ell(s)=\lfloor e^{-t}M_\ell\rfloor-e^{-(t-s)}\lfloor e^{-s}M_\ell\rfloor=\alpha\{e^{-s}M_\ell\}-\{e^{-t}M_\ell\}.
\]
Thus $|M_\ell(t)-\alpha M_\ell(s)|\leq 1$. Also,
\[
M_\ell(s)\ge e^{-T}M_\ell-1\ge \frac12 e^{-T}M_\ell.
\]
Consequently,
\[
\left|\frac{M_\ell(t)}{M_\ell(s)}-\alpha\right|=\left|\frac{M_\ell(t)-\alpha M_\ell(s)}{M_\ell(s)}\right|
\le \frac{1}{M_\ell(s)}
\le \frac{2e^T}{M_\ell}.
\]
Similarly,
\[
\left|\frac{N_\ell(t)}{N_\ell(s)}-\alpha\right|
\le \frac{2e^T}{N_\ell}.
\]
Let  \(u_\ell=\frac{M_\ell(t)}{M_\ell(s)},v_\ell=\frac{N_\ell(t)}{N_\ell(s)}\). Since \(s\le t\), they lie in \([0,1]\).  Therefore
\[
|u_\ell v_\ell-\alpha^2|
=|(u_\ell-\alpha)v_\ell+\alpha(v_\ell-\alpha)|
\le 2e^T(M_\ell^{-1}+N_\ell^{-1}).
\]
As $M_\ell(t)\leq M_\ell(s)$, $N_\ell(t)\leq N_\ell(s)$, we applying Lemma \ref{Comparison of nested rectangles lemma} to the array \(b(m,n)=\prod_{j=0}^{k}\lambda(m+a_jn)\), we obtain
\[
\begin{aligned}
|f_\ell(t)-f_\ell(s)|
&\le 2(1-u_\ell v_\ell)\\
&\le 2(1-\alpha^2)+4e^T(M_\ell^{-1}+N_\ell^{-1})\\
&\le 4(t-s)+4e^T(M_\ell^{-1}+N_\ell^{-1}),
\end{aligned}
\]
where \(1-e^{-2v}\le 2v\) was used for \(v=t-s \ge 0\). Interchanging \(s,t\) proves
\begin{equation}\label{bound for ft-fs}
  |f_\ell(t)-f_\ell(s)|
\le 4|t-s|+\epsilon_\ell(T),\qquad
\epsilon_\ell(T)=4e^T(M_\ell^{-1}+N_\ell^{-1}). 
\end{equation}

For each fixed \(T\), \(\epsilon_\ell(T)\to 0\). In particular, this estimate remains valid when \(M_\ell/N_\ell\) tends to zero or infinity.

We now choose the subsequence. Enumerate \(\mathbb{Q}=\{\xi_1,\xi_2,\dots\}\), with \(\xi_1=0\). Since \(f_\ell(\xi_1)\) belongs to \([-1,1]\)  for every $\ell$, we can choose $J_1\subset \N$ such that ${\displaystyle \lim_{\ell\in J_1,\ell \to \infty}f_\ell(\xi_i)} $ exists. Inductively, we choose nested infinite sets of indices
\[
J_1\supset J_2\supset\cdots
\]
such that \(f_\ell(\xi_i)\) converges as \(\ell\) tends to infinity through \(J_i\). Choose increasing indices \(\ell_i\in J_i\). For every fixed \(j\), all \(\ell_i\) with \(i\ge j\) belong to \(J_j\). Thus the subsequence \(\{f_{\ell_i}\}\) converges at every rational point. We relabel this subsequence as \((f_\ell)\) and denote its rational limit by \(g(\xi)\).

For rational \(s,t\), choose \(T\) containing them and pass to the limit in \eqref{bound for ft-fs}. This gives
\[
|g(t)-g(s)|\le 4|t-s|.
\]
For any real \(t\), choose rational numbers \(\zeta_i\to t\). The sequence \(g(\zeta_i)\) is Cauchy by this inequality, so it has a real limit. If \(\zeta_i'\to t\) is another rational sequence, then \(|g(\zeta_i)-g(\zeta_i')|\le 4|\zeta_i-\zeta_i'|\to 0\). The limit is therefore independent of the approximation. Define it to be \(f(t)\). Passing to limits in the rational inequality proves \eqref{Compactness of the profiles bounded}.

It remains to prove convergence of \(\{f_\ell\}\) at all real points, and uniformly on compact intervals. Fix \(T>0\) and \(0<\eta<1\). Choose a finite set $S$ of rational points in \([-T-1,T+1]\) which is \(\eta/16\)-dense in \([-T,T]\).  For large enough \(\ell\), both
\[
\max_{s\in S}|f_\ell(s)-f(s)|<\eta/4,\qquad
\epsilon_\ell(T+1)<\eta/4
\]
hold. For any $t\in [-T,T] $, let $ s\in S$ be such that $|t-s|<\eta/16$. Then
\[
\begin{aligned}
|f_\ell(t)-f(t)|
&\le |f_\ell(t)-f_\ell(s)|+|f_\ell(s)-f(s)|+|f(s)-f(t)|\\
&\le 4|t-s|+\epsilon_\ell(T+1)+\eta/4+4|t-s|\\
&<8(\eta/16)+\eta/4+\eta/4=\eta.
\end{aligned}
\]
Notice that the choice of \(\ell\) is uniform in \(t\in[-T,T]\), proving the asserted locally uniform convergence.
\end{proof}
There are two distinct compactness constructions. Lemma \ref{Compactness of the profiles lemma} selects a single function \(f\) at all real scales; it does not select measures at all real scales. We now fix a scale, identify its discrepancies, and only then choose a stationary measure.
\begin{lem}\label{Arithmetic discrepancies of the profile lemma}
For the subsequence in Lemma \ref{Compactness of the profiles lemma}, every fixed \(t\in\R\) and fixed prime \(p\) satisfy
\begin{equation}\label{Arithmetic discrepancies of the profile}
  \lim_{\ell\to\infty}E_{k,p}(M_\ell(t),N_\ell(t))
=-f(t+\log p)-f(t).  
\end{equation}

\end{lem}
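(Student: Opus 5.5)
The plan is to start from the exact identity \eqref{Ekp and Dk} applied at the rectangle $(M_\ell(t),N_\ell(t))$, where $M_\ell(t)=\lfloor e^{-t}M_\ell\rfloor$ and $N_\ell(t)=\lfloor e^{-t}N_\ell\rfloor$. For fixed $t$ and $p$, both side lengths tend to infinity, so for large $\ell$ they exceed $p$ and the identity is available:
\[
E_{k,p}(M_\ell(t),N_\ell(t))=-c_p(M_\ell(t),N_\ell(t))\,D_k\bigl(\lfloor M_\ell(t)/p\rfloor,\lfloor N_\ell(t)/p\rfloor\bigr)-D_k(M_\ell(t),N_\ell(t)).
\]
It then suffices to treat the three pieces separately. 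The last term tends to $f(t)$ by Lemma \ref{Compactness of the profiles lemma}. By \eqref{bounded fof cp}, $0\le 1-c_p\le (p-1)(M_\ell(t)^{-1}+N_\ell(t)^{-1})\to 0$. Since $|D_k|\le 1$, replacing $c_p$ by $1$ costs $o(1)$.

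The remaining step, and the only real point, is to identify the limit of $D_k(\lfloor M_\ell(t)/p\rfloor,\lfloor N_\ell(t)/p\rfloor)$ with $f(t+\log p)$. The obstacle is that $\lfloor\lfloor e^{-t}M_\ell\rfloor/p\rfloor$ is not literally $\lfloor e^{-(t+\log p)}M_\ell\rfloor$. They do agree, by the standard identity $\lfloor\lfloor x\rfloor/p\rfloor=\lfloor x/p\rfloor$ for an integer $p\ge1$ and real $x\ge 0$: write $\lfloor x\rfloor=pq+s$ with $0\le s<p$; then $x/p\in[q,q+1)$.

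With this identity, $\lfloor M_\ell(t)/p\rfloor=M_\ell(t+\log p)$, and likewise for $N$. Hence the middle term equals $f_\ell(t+\log p)$, which converges to $f(t+\log p)$ by Lemma \ref{Compactness of the profiles lemma}, since the limit exists at every real point.

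If one prefers to avoid the exact identity, a robust alternative is available. Both coordinates differ by at most $1$ from $M_\ell(t+\log p)$ and $N_\ell(t+\log p)$. Lemma \ref{Comparison of nested rectangles lemma}, applied through a common sub-rectangle, bounds the difference of the $D_k$ values by $O(M_\ell^{-1}+N_\ell^{-1})e^{|t|}p\to0$.

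Combining the three limits gives \eqref{Arithmetic discrepancies of the profile}.
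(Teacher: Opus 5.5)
Your proposal is correct and matches the paper's proof step for step. You apply \eqref{Ekp and Dk} at $(M_\ell(t),N_\ell(t))$, use $\lfloor\lfloor x\rfloor/p\rfloor=\lfloor x/p\rfloor$ to identify the dilated rectangle exactly with $(M_\ell(t+\log p),N_\ell(t+\log p))$, and control $1-c_p$ via \eqref{bounded fof cp} together with $|D_k|\le 1$ (the fallback through Lemma \ref{Comparison of nested rectangles lemma} is valid but not needed).
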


\begin{proof}
For \(x\ge 0\) and any positive integer \(p\), we have
\begin{equation}\label{floor equation}
    \left\lfloor\frac{\lfloor x\rfloor}{p}\right\rfloor
=\left\lfloor\frac{x}{p}\right\rfloor. 
\end{equation}
Indeed, write \(\lfloor x\rfloor=pa+b\) with \(0\le b<p\), and write \(x=pa+b+u\), \(0\le u<1\). Since \(0\le b+u<p\), both floors equal \(a\). Therefore
\[
\lfloor M_\ell(t)/p\rfloor=M_\ell(t+\log p),\qquad
\lfloor N_\ell(t)/p\rfloor=N_\ell(t+\log p).
\]
There is no additional floor error in these identities.

For large enough \(\ell\), \(M_\ell(t),N_\ell(t)\ge p\). Apply \eqref{Ekp and Dk} for  $M_\ell(t),N_\ell(t)$, we have
$$ E_{k,p}(M_\ell(t),N_\ell(t))=-c_{l,p}(t)D_k(\lfloor \frac{M_\ell(t)}{p} \rfloor,\lfloor \frac{N_\ell(t)}{p}\rfloor)-D_k(M_\ell(t),N_\ell(t)),$$
where $${\displaystyle c_{l,p}(t)=\frac{p^2\lfloor \frac{M_\ell(t)}{p} \rfloor\lfloor \frac{N_\ell(t)}{p}\rfloor}{M_\ell(t)N_\ell(t)}. }$$  
Recall that $M_\ell(t)=\lfloor e^{-t}M_\ell\rfloor,
N_\ell(t)=\lfloor e^{-t}N_\ell\rfloor,$ then $\lfloor \frac{M_\ell(t)}{p} \rfloor=M_\ell(t+\log p) $ and $\lfloor \frac{N_\ell(t)}{p} \rfloor=N_\ell(t+\log p) $. Thus we have
\[
c_{\ell,p}(t)=\frac{p^2M_\ell(t+\log p)N_\ell(t+\log p)}
{M_\ell(t)N_\ell(t)}
\]
and therefore
\[
E_{k,p}(M_\ell(t),N_\ell(t))
=-c_{\ell,p}(t)f_\ell(t+\log p)-f_\ell(t).
\]
By \eqref{bounded fof cp},
\[
0\le 1-c_{\ell,p}(t)
\le \frac{p-1}{M_\ell(t)}+\frac{p-1}{N_\ell(t)}.
\]
Hence
\[
\begin{aligned}
&|E_{k,p}(M_\ell(t),N_\ell(t))+f(t+\log p)+f(t)|\\
&\le (\frac{p-1}{M_\ell(t)}+\frac{p-1}{N_\ell(t)})| f_\ell(t+\log p)|
+|f_\ell(t+\log p)-f(t+\log p)|
+|f_\ell(t)-f(t)|.
\end{aligned}
\]
As $|f_\ell|\leq 1 $ and $f_\ell(t)\to f(t)$ for fixed $t$, all four terms tend to zero. This proves the lemma. The prime \(p\) is fixed throughout this limit.
\end{proof}
\begin{prop}\label{The entropy estimate at every profile scale proposition}
For every  \(f\) from Lemma \ref{Compactness of the profiles lemma} and every \(t\in \R\),
\begin{equation}\label{The entropy estimate at every profile scale}
    \sum_{r=0}^{\infty}\frac{R_r}{H_r}A_r(t)^2\le 50(k+1)(a_k+1)\log 2,\text{ where }
A_r(t)=\frac{1}{R_r}\sum_{p\in\cP_r}|f(t+\log p)+f(t)|. 
\end{equation}

\end{prop}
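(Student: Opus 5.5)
Fix \(t\in\R\) and the subsequence \((M_\ell,N_\ell)\) from Lemma \ref{Compactness of the profiles lemma}. The plan is to build one stationary measure at scale \(t\) and feed it into Proposition \ref{summable estimate}. Consider the empirical measures \(\mu^\lambda_{M_\ell(t),N_\ell(t)}\) from \eqref{mu a M N}, with \(M_\ell(t)=\lfloor e^{-t}M_\ell\rfloor\) and \(N_\ell(t)=\lfloor e^{-t}N_\ell\rfloor\). Both side lengths tend to infinity, and \(X\) is compact metrizable. So we can pass to a further subsequence along which \(\mu^\lambda_{M_\ell(t),N_\ell(t)}\) converges weakly* to some \(\mu_t\).

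By Lemma \ref{invariant measure property}, \(\mu_t\) is \(S\)- and \(T\)-invariant, and \(\pi_*\mu_t\) is the product uniform measure on \(K\). These are exactly the hypotheses of Proposition \ref{summable estimate}.

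Next we identify \(e_p=\int G_p\,d\mu_t\). Each \(G_p\) is continuous on \(X\): it depends only on \(x_j(0)\) and on \((\rho_p,\sigma_p)\), which range over finite discrete coordinates. Weak* convergence therefore gives
\[
\int G_p\,d\mu_t=\lim_{\ell\to\infty}\int G_p\,d\mu^\lambda_{M_\ell(t),N_\ell(t)}=\lim_{\ell\to\infty}E_{k,p}(M_\ell(t),N_\ell(t)),
\]
where the second equality is \eqref{intergral expression}. The limit is taken along the further subsequence, but Lemma \ref{Arithmetic discrepancies of the profile lemma} shows the full limit exists and equals \(-f(t+\log p)-f(t)\). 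Restricting to a subsequence does not change it. Hence \(|e_p|=|f(t+\log p)+f(t)|\) for every prime \(p\), so \(q_r=A_r(t)\).

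Proposition \ref{summable estimate} applied to \(\mu_t\) now gives \(\sum_r \frac{R_r}{H_r}A_r(t)^2\le 50(k+1)(a_k+1)\log 2\), which is the claim.

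The only delicate point is the order of extraction. The function \(f\) is fixed first. The measure \(\mu_t\) is chosen afterwards along a sub-subsequence, so it may depend on \(t\); this is harmless because the discrepancy limits are already determined by \(f\). We also need the continuity of \(G_p\) and the fact that, for fixed \(p\), \(M_\ell(t),N_\ell(t)\ge p\) eventually. Both are straightforward.
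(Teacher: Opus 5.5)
Your proposal is correct and follows the paper's proof essentially step by step. You extract a further subsequence giving an $S,T$-invariant limit $\mu_t$ with uniform residue marginal, identify $\int G_p\,d\mu_t=-f(t+\log p)-f(t)$ via weak* convergence, \eqref{intergral expression} and Lemma \ref{Arithmetic discrepancies of the profile lemma}, and then apply Proposition \ref{summable estimate} to $\mu_t$. The paper additionally truncates at a finite $L$ and lets $L\to\infty$, which is cosmetic since Proposition \ref{summable estimate} already bounds the full series.
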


\begin{proof}
Fix \(t\). The side lengths \(M_\ell(t),N_\ell(t)\) tend to infinity, so Lemma \ref{invariant measure property} supplies a further subsequence on which
\[
\mu_{M_\ell(t),N_\ell(t)}^\lambda\to \mu_t.
\]
The limit is invariant under \(S,T\) and has product uniform residue marginal.

For each fixed prime \(p\), weak* convergence and \eqref{intergral expression} give
\[
\int G_p\,d\mu_t
=\lim_{\ell\to\infty}\int G_p\,d\mu_{M_\ell(t),N_\ell(t)}
=\lim_{\ell\to\infty}E_{k,p}(M_\ell(t),N_\ell(t)).
\]
The last limit is already determined by Lemma \ref{Arithmetic discrepancies of the profile lemma}, even before passing to this further subsequence. Thus
\[
e_p(t):=\int G_p\,d\mu_t=-f(t+\log p)-f(t)
\]
for every prime \(p\). All these identities concern the same measure \(\mu_t\). Although \(\|G_p\|_\infty\) grows with \(p\), weak* convergence is being applied to one fixed continuous function at a time.

Now apply Proposition \ref{summable estimate} directly to the already constructed measure \(\mu_t\). For each finite \(L\) it gives
\[
\sum_{r=0}^{L}\frac{R_r}{H_r}
\left(\frac{1}{R_r}\sum_{p\in\cP_r}|e_p(t)|\right)^2
\le 50(k+1)(a_k+1)\log 2.
\]
Substitute the identified values of \(e_p(t)\) and let \(L\to\infty\). The summands are nonnegative, so the finite bound passes to the increasing limit and proves \eqref{The entropy estimate at every profile scale}.

There is no interchange of the rectangle limit with an infinite prime sum. The sum is estimated on a fixed limiting measure after that measure has been constructed. The further subsequence and \(\mu_t\) may depend on \(t\); the function \(f\) does not, since all its values were fixed by Lemma \ref{Compactness of the profiles lemma}. We never need a simultaneous or measurable choice of the measures \(\mu_t\).
\end{proof}

\section{Overlapping prime intervals force the profile to vanish}

At this stage the arithmetic variables have disappeared. By \eqref{Compactness of the profiles}, we have a \(4\)-Lipschitz function \(f:\R\to[-1,1]\) satisfying \eqref{The entropy estimate at every profile scale}. We prove an analytic assertion about this function. The argument has three ingredients: replacing prime averages by integrals uniformly for the moving test functions, selecting a common sequence of good prime intervals for two scales, and comparing the overlapping intervals on the logarithmic axis.
\begin{lem}\label{Uniform comparison of prime averages lemma}
For any \(4\)-Lipschitz \(f:\R\to[-1,1]\), let
\[
A_r(t)=\frac{1}{R_r}\sum_{H_r<p\le 2H_r}|f(t+\log p)+f(t)|
\]
and
\begin{equation}\label{Uniform comparison of prime averages}
    J_r(t)=\int_1^2|f(t+\log H_r+\log u)+f(t)|du. 
\end{equation}
There are numbers \(\epsilon_r\to 0\), independent of \(f,t\), such that
\begin{equation}\label{error between Ar and Jr}
    |A_r(t)-J_r(t)|\le \epsilon_r. 
\end{equation}

\end{lem}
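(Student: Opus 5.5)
We will compare the prime average with the integral by first changing variables to $u=p/H_r\in(1,2]$, so that
\[
A_r(t)=\frac{1}{R_r}\sum_{p\in\cP_r}g_t(p/H_r),\qquad g_t(u)=|f(t+\log H_r+\log u)+f(t)|,\qquad J_r(t)=\int_1^2 g_t(u)\,du .
\]
The key observation is that the family $\{g_t\}$ is uniformly bounded and uniformly Lipschitz on $[1,2]$, independently of $f$, $t$ and $r$. Indeed, $0\le g_t\le 2$, and since $\bigl||a|-|b|\bigr|\le|a-b|$ and $f$ is $4$-Lipschitz,
\[
|g_t(u)-g_t(u')|\le 4|\log u-\log u'|\le 4|u-u'|\qquad(u,u'\in[1,2]).
\]
So the lemma reduces to a statement about equidistribution of the normalized primes $p/H_r$ in $[1,2]$, tested against the class $\mathcal{L}$ of functions $g:[1,2]\to[0,2]$ that are $4$-Lipschitz.

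Next we partition $[1,2]$ into $K$ intervals $I_i=(1+\tfrac{i-1}{K},1+\tfrac{i}{K}]$. For $g\in\mathcal{L}$, replacing $g$ by its value at the left endpoint on each $I_i$ costs at most $4/K$ both in the prime average and in the integral. Hence
\[
|A_r(t)-J_r(t)|\le \frac{8}{K}+2\sum_{i=1}^{K}\left|\frac{\#\{p\in\cP_r: p/H_r\in I_i\}}{R_r}-\frac1K\right|.
\]
The prime number theorem gives $\pi(x)\sim x/\log x$. Because $\log(H_r(1+i/K))=r\log2+O(1)$, it follows that
\[
\#\{p: H_r(1+\tfrac{i-1}{K})<p\le H_r(1+\tfrac iK)\}=\frac{H_r/K}{r\log 2}(1+o(1)),\qquad R_r=\frac{H_r}{r\log 2}(1+o(1))
\]
as $r\to\infty$ for each fixed $K$. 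Therefore each ratio tends to $1/K$, and the sum over $i$ tends to $0$ for fixed $K$.

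Finally we set
\[
\epsilon_r=\sup_{g\in\mathcal{L}}\Bigl|\frac1{R_r}\sum_{p\in\cP_r}g(p/H_r)-\int_1^2g\Bigr|\le 4 .
\]
This quantity does not depend on $f$ or $t$. The estimate above gives $\limsup_r\epsilon_r\le 8/K$ for every $K$, so $\epsilon_r\to0$. Small $r$ cause no difficulty, because $\epsilon_r\le 4$ always. The main point requiring care is the uniformity in $f$ and $t$. It is handled entirely by the uniform Lipschitz bound on $g_t$, so that only the finitely many interval counts at a fixed $K$ need the prime number theorem. The only analytic input is the prime number theorem in the form $\pi(cx)-\pi(x)\sim(c-1)x/\log x$ for fixed $c>1$; nothing about short intervals is needed.
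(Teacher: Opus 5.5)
Your proposal is correct and follows the paper's proof essentially step for step. It uses the same normalization $u=p/H_r$, the same uniformly bounded $4$-Lipschitz class of test functions, and the same step-function approximation on a partition of $[1,2]$ yielding the bound $8/K+2\sum_i\bigl|\nu_r(I_i)-1/K\bigr|$; it likewise takes $\epsilon_r$ as the supremum over that class, with the prime number theorem needed only for finitely many fixed intervals.
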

\begin{proof}
    Normalize the primes in the interval to points in \([1,2]\): Since $p/H_r\in [1,2]$, we define the measure supported on $[1,2]$ by
\[
\nu_r=\frac{1}{R_r}\sum_{H_r<p\le 2H_r}\delta_{p/H_r}.
\]
Let $\pi:\R\to \N $ be the prime-counting function that $\pi(x)=\#\{p: p\leq x \text{ and }p \text{ prime}\}$. Recall the definition \eqref{def of H P R} of $R_r$, we have $R_r=\pi(2H_r)-\pi(H_r)$. For any fixed $c$, by the prime number theorem we have
\[
\pi(cH_r)=\frac{cH_r}{\log cH_r}(1+o(1))=\frac{cH_r}{\log H_r}(1+o(1)).
\]
Thus for any $a<b\in [1,2]$,  
$$\nu_r((a,b])= \frac{1}{R_r}(\pi(bH_r)-\pi(aH_r))=\frac{\pi(bH_r)-\pi(aH_r)}{\pi(2H_r)-\pi(H_r)}
\to b-a.$$ 
The limiting measure is Lebesgue measure on \([1,2]\), of total mass one.

We need uniformity of the limit because our test function depends on \(r\). Let \(\cG\) be the set of all $4$-Lipschitz functions $g:[1,2]\to[-2,2]$. We want to show that 
\begin{equation}\label{uniformly bound}
    \sup_{g\in \cG}\left|\int_1^2 g(u)d\nu_r(u)-\int_1^2g(u)du\right|\to 0
\end{equation}
Fix an integer \(L\ge 1\). Set
\[
u_i=1+\frac{i}{L}\quad (0\le i\le L),\qquad
I_i=(u_{i-1},u_i]\quad (1\le i\le L).
\]
Both \(\nu_r\) and Lebesgue measure have zero mass at \(1\), so these intervals partition their support up to a null set. For \(g\in\cG\), replace \(g\) on \(I_i\) by \(g(u_{i-1})\), obtaining a step function \(g_L\). As $g$ is $4$-Lipschitz, one has that \(|g-g_L|\le 4/L\) on every \(I_i\). Thus
\[
\begin{aligned}
\left|\int_1^2 g(u)\,d\nu_r(u)-\int_1^2 g(u)\,du\right|
&\le \frac{8}{L}
+\left|\sum_{i=1}^{L}g(u_{i-1})
\left(\nu_r(I_i)-\frac{1}{L}\right)\right|\\
&\le \frac{8}{L}+2\sum_{i=1}^{L}
\left|\nu_r(I_i)-\frac{1}{L}\right|.
\end{aligned}
\]
In particular,
\begin{equation}\label{uniformly bound L}
 \sup_{g\in\cG}\left|\int_1^2 g(u)\,d\nu_r(u)-\int_1^2 g(u)\,du\right|
\le \frac{8}{L}+2\sum_{i=1}^{L}
\left|\nu_r(I_i)-\frac{1}{L}\right|.   
\end{equation}

Given \(\eta>0\), first choose \(L\) so that \(8/L<\eta/2\). For this fixed partition, the prime number theorem makes the remaining sum smaller than \(\eta/2\) once \(r\) is large. The supremum in \eqref{uniformly bound L} therefore tends to zero.

Finally, define
\[
g_{r,t}(u)=|f(t+\log H_r+\log u)+f(t)|.
\]
It satisfies \(|g_{r,t}|\le 2\), and the reverse triangle inequality gives
\[
|g_{r,t}(u)-g_{r,t}(v)|
\le 4|\log u-\log v|
\le 4|u-v|.
\]
So \(g_{r,t}\in\cG\) for every \(r,t,f\) under consideration. Its integral against \(\nu_r\) is exactly \(A_r(t)\), and its Lebesgue integral is \(J_r(t)\). Taking \(\epsilon_r\) to be the supremum in \eqref{uniformly bound L} proves the statement.
\end{proof}
\begin{lem}\label{common sequence of small terms}
Let \(w_r>0\) satisfy \(\sum_r w_r=\infty\), and let \(a_r,b_r\ge 0\) satisfy
\[
C:=\sum_r w_r(a_r^2+b_r^2)<\infty.
\]
There exist increasing indices \(r_\ell\to\infty\) such that
\[
a_{r_\ell}+b_{r_\ell}<1/\ell.
\]
\end{lem}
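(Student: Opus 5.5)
The plan is a proof by contradiction applied to one threshold at a time, followed by an inductive choice of indices. The key claim is this: for every $\epsilon>0$ and every $N$, there is some $r\ge N$ with $a_r+b_r<\epsilon$.

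To prove the claim, suppose it fails. Then there are $\epsilon>0$ and $N$ such that $a_r+b_r\ge\epsilon$ for all $r\ge N$. Since $(a_r+b_r)^2\le 2(a_r^2+b_r^2)$, we get $a_r^2+b_r^2\ge \epsilon^2/2$ for all $r\ge N$. Using $w_r>0$, this gives
\[
C\ \ge\ \sum_{r\ge N} w_r(a_r^2+b_r^2)\ \ge\ \frac{\epsilon^2}{2}\sum_{r\ge N} w_r=\infty .
\]
The last equality holds because removing finitely many positive terms from a divergent series of positive terms leaves it divergent. This contradicts $C<\infty$.

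With the claim in hand, construct the indices inductively. Take $r_1$ to be any index with $a_{r_1}+b_{r_1}<1$; it exists by the claim with $\epsilon=1$ and $N=0$. Having chosen $r_1<\dots<r_{\ell-1}$, apply the claim with $\epsilon=1/\ell$ and $N=r_{\ell-1}+1$. This produces $r_\ell>r_{\ell-1}$ with $a_{r_\ell}+b_{r_\ell}<1/\ell$. The resulting sequence is strictly increasing and consists of integers, so $r_\ell\to\infty$.

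There is no real obstacle here. The only point needing care is the tail-divergence argument, and in particular that the bound is applied to $a_r+b_r$ together. This gives one common sequence for both $a$ and $b$, rather than two separate sequences that might not intersect.
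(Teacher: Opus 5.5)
Your proof is correct and is essentially the paper's argument. The paper shows directly that the exceptional set $E_\ell=\{r: a_r^2+b_r^2\ge 1/(2\ell^2)\}$ has $w$-weight at most $2\ell^2 C$, so its complement is infinite, and then picks $r_\ell\notin E_\ell$ with $r_\ell>r_{\ell-1}$, using $a+b\le\sqrt{2(a^2+b^2)}$; your tail-contradiction formulation is the same weighted Chebyshev estimate against the divergence of $\sum_r w_r$, just phrased contrapositively.
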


\begin{proof}
For each positive integer \(\ell\), define the exceptional set
\[
E_\ell=\{r:a_r^2+b_r^2\ge 1/(2\ell^2)\}.
\]
We have \[
\frac{1}{(2\ell^2)}\sum_{r\in E_\ell}w_r
\le \sum_{r\in E_\ell}w_r(a_r^2+b_r^2)\le C
\]
 and therefore $\sum_{r\notin E_{\ell}}w_r=\infty $. Thus $\N\setminus E_\ell  $ are infinite sets for each $ \ell$. Then we can choose $ r_\ell$ inductively by $r_{\ell}\notin E_\ell$ and $r_{\ell}>r_{\ell-1}$. Then
\[
a_{r_\ell}+b_{r_\ell}
\le \sqrt{2(a_{r_\ell}^2+b_{r_\ell}^2)}<1/\ell.
\]
\end{proof}

\begin{lem}\label{The overlap inequality lemma}
For $f$ and  \(J_r\) from Lemma \ref{Uniform comparison of prime averages lemma} and \(0<h<\log 2\),
\begin{equation}\label{The overlap inequality}
    (\log 2-h)|f(t+h)-f(t)|\le J_r(t)+J_r(t+h)
\qquad (t\in\R,r\ge 0).
\end{equation}
\end{lem}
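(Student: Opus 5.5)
Substitute $s=\log u$ in the definition of $J_r$, so that the integral lives on the logarithmic axis. Since $du=e^s\,ds$ and $e^s\ge 1$ on $[0,\log 2]$,
\[
J_r(t)=\int_0^{\log 2}|f(t+\log H_r+s)+f(t)|\,e^s\,ds\ \ge\ \int_0^{\log 2}|f(t+\log H_r+s)+f(t)|\,ds .
\]
Write $c=\log H_r$. The same bound for $J_r(t+h)$, after shifting the variable by $s\mapsto s-h$, gives
\[
J_r(t+h)\ \ge\ \int_h^{\log 2+h}|f(t+c+s)+f(t+h)|\,ds .
\]

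The two integration windows on the logarithmic axis are $[0,\log 2]$ and $[h,\log 2+h]$. Because $0<h<\log 2$, they overlap in $[h,\log 2]$, which has length $\log 2-h>0$. Drop the non-overlapping parts of both integrals; the integrands are nonnegative, so this only decreases them. Adding the two lower bounds gives
\[
J_r(t)+J_r(t+h)\ \ge\ \int_h^{\log 2}\Bigl(|f(t+c+s)+f(t)|+|f(t+c+s)+f(t+h)|\Bigr)\,ds .
\]
For each $s$, the triangle inequality yields
\[
|f(t+c+s)+f(t)|+|f(t+c+s)+f(t+h)|\ \ge\ \bigl|(f(t+c+s)+f(t+h))-(f(t+c+s)+f(t))\bigr| = |f(t+h)-f(t)| ,
\]
which is independent of $s$. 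Integrating over $[h,\log 2]$ gives exactly $(\log 2-h)|f(t+h)-f(t)|$, which is \eqref{The overlap inequality}.

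None of the steps is a real obstacle. The only point needing care is the change of variables: one must note that the Jacobian $e^s$ is at least $1$, so passing to the $ds$-integral gives a lower bound rather than an equality. The argument uses only that $f$ is measurable and bounded, so that the integrals exist; the Lipschitz property and the range $[-1,1]$ are not needed here. The bound holds for every $r\ge 0$ and $t\in\R$, since $c=\log H_r$ enters only as a common shift in both integrands.
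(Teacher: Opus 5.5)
Your proof is correct and follows the paper's argument essentially step for step. The paper substitutes $w=t+\log H_r+\log u$, which is your $s$ up to a shift, and discards the Jacobian factor $e^{w-t-\log H_r}\ge 1$. It then applies the triangle inequality pointwise on the overlap $I_r(t)\cap I_r(t+h)$, which has length $\log 2-h$, and integrates over it.
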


\begin{proof}
The logarithms of the points in \([H_r,2H_r]\), translated by \(t\), form the interval
\[
I_r(t)=[t+\log H_r,t+\log H_r+\log 2].
\]
In \eqref{Uniform comparison of prime averages} make the change of variables \(w=t+\log H_r+\log u\). Then
\[
J_r(t)=\int_{I_r(t)}|f(w)+f(t)|e^{w-t-\log H_r}\,dw.
\]
Note that $e^{w-t-\log H_r}\in[1,2]$ when $w\in I_r(t)$. Therefore
\begin{equation}\label{w t and J}
\int_{I_r(t)}|f(w)+f(t)|\,dw\le J_r(t).     
\end{equation}

Apply the same identity at \(t+h\). The two intervals intersect in
\[
K_r(t,h)=I_r(t)\cap I_r(t+h)
=[t+h+\log H_r,t+\log H_r+\log 2],
\]
which has length \(\log 2-h>0\). For every $w\in K_r(t,h)$,
\[
|f(t+h)-f(t)|
=|(f(w)+f(t+h))-(f(w)+f(t))|
\le |f(w)+f(t+h)|+|f(w)+f(t)|.
\]
Integrate over \(K_r(t,h)\) for $w$. 
Using \eqref{w t and J} at both scales proves
\[
\begin{aligned}
(\log 2-h)|f(t+h)-f(t)|
&\le \int_{I_r(t+h)}|f(w)+f(t+h)|\,dw\\
&\quad+\int_{I_r(t)}|f(w)+f(t)|\,dw\\
&\le J_r(t+h)+J_r(t).
\end{aligned}
\]
\end{proof}
\begin{prop}\label{Vanishing of a finite-energy profile prop}
Suppose \(f:\R\to[-1,1]\) is \(4\)-Lipschitz and, for every fixed \(t\in\R\),
\begin{equation}\label{Vanishing of a finite-energy profile}
 \sum_{r=0}^{\infty}\frac{R_r}{H_r}A_r(t)^2<\infty,\qquad
A_r(t)=\E_{p\in\cP_r}|f(t+\log p)+f(t)|.   
\end{equation}
Then \(f\equiv 0\).
\end{prop}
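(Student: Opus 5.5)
The plan is to combine Lemma \ref{Uniform comparison of prime averages lemma}, Lemma \ref{common sequence of small terms} and Lemma \ref{The overlap inequality lemma}: first show that $f$ is constant, and then use the divergence of $\sum_r R_r/H_r$ to show that the constant is zero.

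\textbf{Step 1: $J_r(t)\to 0$ along a subsequence.} Fix $t\in\R$ and $0<h<\log 2$. Set $w_r=R_r/H_r$, $a_r=A_r(t)$ and $b_r=A_r(t+h)$.

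By the hypothesis \eqref{Vanishing of a finite-energy profile}, applied at the two scales $t$ and $t+h$, we have $\sum_r w_r(a_r^2+b_r^2)<\infty$.

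By the prime number theorem, $R_r/H_r\sim 1/(r\log 2)$, so $\sum_r w_r=\infty$. Lemma \ref{common sequence of small terms} therefore gives indices $r_\ell\to\infty$ with
\[
A_{r_\ell}(t)+A_{r_\ell}(t+h)<1/\ell .
\]
By \eqref{error between Ar and Jr},
\[
J_{r_\ell}(t)+J_{r_\ell}(t+h)\le 1/\ell+2\epsilon_{r_\ell}\to 0 .
\]

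\textbf{Step 2: $f$ is constant.} Insert the bound from Step 1 into the overlap inequality \eqref{The overlap inequality} at $r=r_\ell$:
\[
(\log 2-h)\,|f(t+h)-f(t)|\le J_{r_\ell}(t)+J_{r_\ell}(t+h)\to 0 .
\]
Since $\log 2-h>0$, this gives $f(t+h)=f(t)$ for every $t\in\R$ and every $h\in(0,\log 2)$. Any two reals differ by a finite sum of such steps, so $f\equiv C$ for some constant $C$.

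\textbf{Step 3: $C=0$.} For constant $f\equiv C$, every summand in $A_r(t)$ equals $|C+C|$, so $A_r(t)=2|C|$. The hypothesis then reads
\[
4C^2\sum_r R_r/H_r<\infty .
\]
This sum diverges, so $C=0$.

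\textbf{Main obstacle.} The divergence $\sum_r R_r/H_r=\infty$ is used twice: in Lemma \ref{common sequence of small terms} and in Step 3. The only real point to check is this divergence. It needs only a lower bound $R_r\gg H_r/r$, which follows from the prime number theorem, or already from Chebyshev-type estimates. Everything else is an assembly of the preceding lemmas.
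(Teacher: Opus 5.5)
Your proposal is correct and follows the paper's proof essentially step for step. It applies Lemma~\ref{common sequence of small terms} with $w_r=R_r/H_r$ at the two scales $t$ and $t+h$, passes from $A_r$ to $J_r$ via \eqref{error between Ar and Jr}, and uses the overlap inequality \eqref{The overlap inequality} to get $f(t+h)=f(t)$ for all $0<h<\log 2$. It then kills the constant with $A_r(t)=2|C|$ and $\sum_r R_r/H_r=\infty$. Your side remark on the divergence is also right: in fact $\sum_r R_r/H_r>\sum_p 1/p=\infty$, since $p>H_r$ for $p\in\cP_r$. The prime number theorem is still needed inside Lemma~\ref{Uniform comparison of prime averages lemma}.
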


\begin{proof}
Let $\pi$ be the prime-counting function. The prime number theorem gives
\[
R_r=\pi(2H_r)-\pi(H_r)=\frac{H_r}{\log H_r}(1+o(1)).
\]
Recall that $H_r=2^r$ and therefore \(\log H_r=r\log 2\), this implies
\begin{equation}\label{condition R/H}
   \frac{R_r}{H_r}\sim \frac{1}{r\log 2}\quad (r\to\infty),\qquad
\sum_{r=0}^{\infty}\frac{R_r}{H_r}=\infty. 
\end{equation}
Only the asymptotic as \(r\to\infty\) is meant in the first formula; the finite initial indices cause no issue.

Fix \(t\in\R\) and \(0<h<\log 2\). Applying the condition \eqref{Vanishing of a finite-energy profile} at \(t\) and at \(t+h\) gives
\[
\sum_r \frac{R_r}{H_r}(A_r(t)^2+A_r(t+h)^2)<\infty.
\]
Use Lemma  \ref{common sequence of small terms} with \(w_r=R_r/H_r\), \(a_r=A_r(t)\), and \(b_r=A_r(t+h)\). There are increasing \(r_\ell\) such that
\begin{equation}\label{At+h +At }
 A_{r_\ell}(t)+A_{r_\ell}(t+h)<1/\ell.    
\end{equation}

By Lemma \ref{Uniform comparison of prime averages lemma},
\begin{equation}\label{Jt+Jt+h}
    0\le J_{r_\ell}(t)+J_{r_\ell}(t+h)
\le A_{r_\ell}(t)+A_{r_\ell}(t+h)+2\epsilon_{r_\ell}
\xrightarrow{\ell \to \infty} 0.
\end{equation}
We use Lemma \ref{The overlap inequality lemma} to find that 
\[
0\le |f(t+h)-f(t)|
\le \frac{J_{r_\ell}(t)+J_{r_\ell}(t+h)}{\log 2-h}
 \xrightarrow{\ell \to \infty} 0.
\]
The left side does not depend on \(\ell\), so it must be zero. Thus we have proved $f$ is $h$-periodic for every $h\in(0,\log 2)$, which implies $f\equiv C$ for some constant $C$.


For a constant function,
\[
A_r(t)=\frac{1}{R_r}\sum_{p\in\cP_r}|C+C|=2|C|.
\]
Substituting into condition \eqref{Vanishing of a finite-energy profile} gives
\[
4|C|^2\sum_r \frac{R_r}{H_r}<\infty.
\]
By \eqref{condition R/H} $\sum_r \frac{R_r}{H_r}=\infty$. Thus \(C=0\), as required.
\end{proof}
\section{Completion}

\begin{proof}[Proof of Theorem \ref{main theorem}]
Fix even \(k\ge 2\). The failure of \eqref{main} means that there is \(\delta>0\) such that for every integer \(L\) one can find \(M,N\ge L\) with \(|D_k(M,N)|\ge\delta\). Choose such a pair \((M_\ell,N_\ell)\) with \(M_\ell,N_\ell\ge\ell\). Then both side lengths tend to infinity, while
\[
|D_k(M_\ell,N_\ell)|\ge\delta\qquad (\ell\ge 1).
\]
Lemma \ref{Compactness of the profiles lemma} gives a subsequence and a common \(4\)-Lipschitz profile \(f\). At \(t=0\), the floored side lengths equal the original integers. Hence
\[
|f(0)|=\left|\lim_{\ell\to\infty}D_k(M_\ell,N_\ell)\right|
=\lim_{\ell\to\infty}|D_k(M_\ell,N_\ell)|\ge\delta.
\]
But Proposition \ref{Vanishing of a finite-energy profile prop} gives \(f\equiv 0\), a contradiction. Therefore \eqref{main} holds.
\end{proof}

\section*{Acknowledgments}
Q.H. was partially supported by National Key Research and Development Program of China 2024YFA1013600; China Postdoctoral Science Foundation 2025M773065; Funda-mental Research Funds for the Central Universities WK0010250102. 
Z.L. was partially supported by National Natural Science Foundation of China 12471187.
K.O. was partially supported by National Natural Science Foundation of China (12371196, 12426201).

\textit{Statement on the Use of AI Tools in This Work}: The authors used AI tools during the preparation of this manuscript. In particular, AI assistance played a significant role in the development of Proposition \ref{Vanishing of a finite-energy profile prop}, where it helped explore and refine the analytic argument showing that a bounded Lipschitz function satisfying the bounded condition must vanish.  All mathematical results, proofs, and interpretations were verified and approved by the authors, who take full responsibility for the content of this work.
\bibliographystyle{alpha}
\bibliography{ref.bib,universal_bib}
\end{document}